\def\normo#1{\left\|#1\right\|}
\def\normb#1{\big\|#1\big\|}
\def\abs#1{|#1|}
\def\aabs#1{\left|#1\right|}
\def\norm#1{\|#1\|}
\def\jb#1{\langle#1\rangle}
\def\wt#1{\widetilde{#1}}
\newcommand{\N}{{\mathbb N}}
\newcommand{\E}{{\mathbb E}}
\newcommand{\R}{{\mathbb R}}
\newcommand{\C}{{\mathbb C}}
\newcommand{\Z}{{\mathbb Z}}
\newcommand{\ft}{{\mathcal{F}}}
\newcommand{\Hl}{{\mathcal{H}}}
\newcommand{\les}{{\;\lesssim\;}}
\newcommand{\Lr}{{\mathcal{L}}}
\def\jb#1{\langle#1\rangle}
\def\norm#1{\|#1\|}
\def\normo#1{\left\|#1\right\|}
\def\normb#1{\big\|#1\big\|}
\def\wt#1{\widetilde{#1}}
\def\aabs#1{\left|#1\right|}
\newcommand{\F}{\mathcal{F}}
\newcommand{\cS}{\mathcal{S}}
\newcommand{\cir}{\mathbb{S}}
\newcommand{\al}{\alpha}
\newcommand{\be}{\beta}
\newcommand{\ga}{\gamma}
\newcommand{\e}{\varepsilon}
\newcommand{\fy}{\varphi}
\newcommand{\om}{\omega}
\newcommand{\te}{\theta}
\newcommand{\s}{\sigma}
\newcommand{\x}{\xi}
\newcommand{\Des}{\Delta_\omega}
\newcommand{\lec}{\lesssim}
\newcommand{\I}{\infty}
\newcommand{\EQ}[1]{\begin{align*}\begin{split} #1 \end{split}\end{align*}}
\newcommand{\EQn}[1]{\begin{align}\begin{split} #1 \end{split}\end{align}}
\newcommand{\Del}[1]{}
\newcommand{\pt}{&}
\newcommand{\pr}{\\ &}
\newcommand{\pq}{\quad}
\numberwithin{equation}{section}
\newtheorem{thm}{Theorem}[section]
\newtheorem{lem}[thm]{Lemma}
\newtheorem{prop}[thm]{Proposition}
\theoremstyle{remark}
\newtheorem{rem}[thm]{Remark}
\begin{document}
\subjclass[2010]{35L70, 35Q55}
\keywords{Strichartz estimates, Nonlinear wave equation, nonlinear Schr\"odinger equation}

\title[Boundary Strichartz estimates]{On the boundary Strichartz estimates for wave and Schr\"odinger equations}

\author[Z. Guo, J. Li, K. Nakanishi and L. Yan]{Zihua Guo, Ji Li, Kenji Nakanishi and Lixin Yan} 
\address{School of Mathematical Sciences, Monash University, VIC 3800, Australia}
\email{zihua.guo@monash.edu}

\address{Department of Mathematics, Macquarie University, NSW, 2109, Australia}
\email{ji.li@mq.edu.au}

\address{Research Institute for Mathematical Sciences Kyoto University Kyoto 606-8502, JAPAN}
\email{kenji@kurims.kyoto-u.ac.jp}

\address{Department of Mathematics, Sun Yat-sen (Zhongshan) University, Guangzhou, 510275, P.R. China}
\email{mcsylx@mail.sysu.edu.cn}

\thanks{Z. G. and J. L. are supported by ARC DP170101060.}

\begin{abstract}
We consider the $L_t^2L_x^r$ estimates for the solutions to the wave and Schr\"odinger equations in high dimensions. For the homogeneous estimates, we show $L_t^2L_x^\infty$ estimates fail at the critical regularity in high dimensions by using stable L\'evy process in $\R^d$. Moreover, we show that some spherically averaged $L_t^2L_x^\infty$ estimate holds at the critical regularity. As a by-product we obtain Strichartz estimates with angular smoothing effect. For the inhomogeneous estimates, we prove double $L_t^2$-type estimates.
\end{abstract}

\maketitle

\tableofcontents

\section{Introduction}
In this paper, we study the space-time (Strichartz) estimates for solutions to the
Schr\"odinger type dispersive equations
\begin{align}\label{eq:schr}
i\partial_{t}u+D^a u=g,\quad u(0,x)=f(x)
\end{align}
where $u(t,x):\R\times \R^{d}\to \C$ is the unknown function, $D=\sqrt{-\Delta}$, $0<a\leq 2$. 
Two typical examples of \eqref{eq:schr} are of particular interest: the wave
equation ($a=1$) and the Schr\"odinger equation ($a=2$). The rest cases ($0<a<2$) are also known as fractional Schr\"odinger equation which has attracted many attentions recently and is a fundamental equation of fractional quantum mechanics, which was derived by Laskin (see \cite{Laskin1,Laskin2}) as a result of extending the Feynman path integral, from the Brownian-like to L\'{e}vy-like quantum mechanical paths.

The Strichartz estimates address the following space-time estimates for the solution $u$ to \eqref{eq:schr}, e.g. when $g=0$, 
\begin{align}\label{eq:striest}
\norm{e^{itD^a}P_0f}_{L_t^qL_x^p(\R\times \R^d)}\les \norm{f}_{L^2},
\end{align}
where $e^{itD^a}$, $P_0$ and $L_t^qL_x^p$ are defined in the end of this section. 
In a pioneering paper Strichartz \cite{Str} first proved \eqref{eq:striest} for the case
$q=p$ by the Fourier restriction method  and then the estimates were substantially extended by many
authors. It is now well-known (see \cite{KT} and references therein) that for $d\geq 1$
the estimate \eqref{eq:striest} holds if and only if $(q,p)$ satisfies the {\it admissible conditions}:
\begin{align}\label{eq:adm}
2\leq q,p\leq \infty,\quad\frac{1}{q}\leq
\frac{d-d_a+2}{2}(\frac{1}{2}-\frac{1}{p}),\quad (q,p,d)\ne (2,\infty,d_a),
\end{align}
where
\begin{align}\label{eq:da}
d_a=
\begin{cases}
2, \quad a\ne 1;\\
3, \quad a=1.
\end{cases}
\end{align}
In particular, the endpoint estimates $(q,p)=(2,2+\frac{4}{d-d_a})$ for $d>d_a$ were proved in \cite{KT}, and the failure of $(q,p,d)=(2,\infty,d_a)$ was proved in \cite{MSmith} for $a=1,2$.

By the scaling invariance of the equation \eqref{eq:schr}: for $\lambda>0$
\begin{align*}
f(x)\to f(\lambda x), \quad u\to u(\lambda^a t, \lambda x),
\end{align*}
the Minkowski inequality and the Littlewood-Paley square function theorem, one can get from \eqref{eq:striest} the following frequency-global Strichartz estimates
\begin{align}\label{eq:striestgl}
\norm{e^{itD^a}f}_{L_t^qL_x^p}\les \norm{f}_{\dot H^{s}},
\end{align}
if $(q,p)$ satisfies the admissible conditions \eqref{eq:adm}, $p\ne \infty$ and the natural scaling condition
\begin{align*}
s=(\frac{1}{2}-\frac{1}{p})d-\frac{a}{q}.
\end{align*}
On the other hand, for $p=\infty$, the estimates \eqref{eq:striestgl}, namely
\begin{align}\label{eq:striestgl2}
\norm{e^{itD^a}f}_{L_t^qL_x^\infty}\les \norm{f}_{\dot H^{\frac{d}{2}-\frac{a}{q}}}
\end{align}
need special treatment due to the failure of the Littlewood-Paley theory in $L^\infty$. For $q>2$ and $d\geq d_a$ (or $q\geq 4$, $d=1$ and $a\neq 1$), one can prove \eqref{eq:striestgl2} by interpolations or directly by $TT^*$ method  (see Section 2). For wave equation $a=1$, \eqref{eq:striestgl2} was studied in \cite{FangWang} and in particular the estimate \eqref{eq:striestgl2} was shown to be false for $(a, q,d)=(1,4, 2)$. Therefore, the only unknown estimates for \eqref{eq:striestgl} are the endpoints $(q,p)=(2,\infty)$ for $d>d_a$ which can not be handled by interpolations or $TT^*$ method as $(2,\infty)$ lies on the boundary of admissible conditions. When $d=d_a$, these endpoint estimates (even weaker version \eqref{eq:striest}) fail and it was known that the failure is logarithmic due to the $t$-integration on the whole line. Indeed, the following estimate was proved by Tao in \cite{Tao2}:
\begin{align*}
\norm{e^{it\Delta}P_0f}_{L_t^2L_x^\infty(I\times \R^2)}\les \log(2+|I|)^{1/2}\norm{f}_{L^2(\R^2)}.
\end{align*}
For $d>d_a$, \eqref{eq:striest} holds for $(q,p)=(2,\infty)$ from which we can get the following two estimates
\begin{align*}
\norm{e^{itD^a}f}_{L_t^2\text{BMO}_x}\les \norm{f}_{\dot H^{s_c}},\quad
\norm{e^{itD^a}f}_{L_t^2L^\infty_x}\les \norm{f}_{H^s}
\end{align*}
where $s>s_c:=\frac{d-a}{2}$, and BMO is the space of bounded mean oscillation. BMO is usually a good substitute for $L^\infty$ in harmonic analysis. Thus we see the $L_t^2L_x^\infty$ estimate is logarithmically missing at the critical regularity. 

The purpose of this paper is to study various $L_t^2L_x^r$-type estimates for \eqref{eq:schr}.  Our first result is

\begin{thm}\label{thm1}
If $0<a\leq 2, d>d_a$ with $d_a$ given by \eqref{eq:da}, then the following estimate fails:
\begin{align}\label{eq:crit}
\norm{e^{itD^a}f}_{L_t^2L^\infty_x}\les \norm{f}_{H^{\frac{d-a}{2}}}.
\end{align}
\end{thm}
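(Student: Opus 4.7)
The plan is to argue by duality and construct a randomised counterexample from sample paths of the symmetric $a$-stable L\'evy process on $\R^d$---the probabilistic object naturally associated to $D^a$---whose expected deviation from the dual bound diverges as the smoothing parameter is removed.

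By standard $TT^*$ duality, the inequality \eqref{eq:crit} is equivalent to
\[
\normb{\int_\R e^{-isD^a}F(s,\cdot)\,ds}_{\dot H^{-s_c}}\les \norm{F}_{L^2_sL^1_x},\qquad s_c=\tfrac{d-a}{2}.
\]
Thus it suffices to exhibit test functions $F$ with deterministically bounded $L^2_sL^1_x$ norm but arbitrarily large left-hand side. Let $\{X_s\}_{s\ge 0}$ denote the symmetric $a$-stable L\'evy process on $\R^d$, characterised by $\E[e^{i\x\cdot X_s}]=e^{-s|\x|^a}$ (Brownian motion for $a=2$). Fixing $T\ge 1$ and a radial mollifier $\rh_\e\to\de_0$, set
\[
F_\e(s,x):=\chi_{[0,T]}(s)\,\rh_\e(x-X_s),
\]
so $\norm{F_\e}_{L^2_sL^1_x}^2=T$ almost surely; define $g:=\int e^{-isD^a}F_\e(s,\cdot)\,ds$. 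Plancherel, Fubini and the stationary-increment identity $\E[e^{-i\x\cdot(X_s-X_{s'})}]=e^{-|s-s'||\x|^a}$ give $\E\norm{g}_{\dot H^{-s_c}}^2=\int_0^T\!\!\int_0^T H_\e(s-s')\,ds\,ds'$ with
\[
H_\e(\ta):=\int_{\R^d}|\x|^{-(d-a)}|\wh{\rh_\e}(\x)|^2 e^{-(|\ta|+i\ta)|\x|^a}\,d\x.
\]

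Polar coordinates and the substitution $u=|\x|^a$ reduce $H_\e$ to a one-dimensional Laplace integral whose pointwise limit as $\e\to 0$ is $c_d/(a(|\ta|+i\ta))$; in particular $\re H_0(\ta)=c_d/(2a|\ta|)$, whereas $\im H_\e$ is odd in $\ta$ and hence vanishes under the symmetric double integration. For $\e$ small one thus has $\re H_\e(\ta)\ges|\ta|^{-1}$ on $|\ta|\ges\e^a$, whence
\[
\E\norm{g}_{\dot H^{-s_c}}^2\;\ges\;\iint_{\substack{s,s'\in[0,T]\\|s-s'|\ge\e^a}}\!\!\frac{ds\,ds'}{|s-s'|}\;\ges\;T\log(T/\e^a)\longrightarrow\infty\text{ as }\e\to 0.
\]
Since $\norm{F_\e}_{L^2_sL^1_x}$ is deterministic while $\E\norm{g}^2_{\dot H^{-s_c}}\to\infty$, for each sufficiently small $\e$ there exists a realisation with $\norm{g}_{\dot H^{-s_c}}/\norm{F_\e}_{L^2_sL^1_x}$ arbitrarily large, contradicting the dual estimate and therefore \eqref{eq:crit}.

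The main obstacle is reconciling the oscillatory propagator with the diffusive L\'evy kernel: the coupling $e^{-i(s-s')|\x|^a}\E[e^{-i\x\cdot(X_s-X_{s'})}]=e^{-(|s-s'|+i(s-s'))|\x|^a}$ integrated against the Riesz kernel $|\x|^{-(d-a)}$ yields the complex resolvent $c_d/(a(|\ta|+i\ta))$, and one must verify that its real part---responsible for the $1/|\ta|$ log-divergence at the diagonal---is not destroyed by the oscillation. The crucial algebraic fact is $\re(|\ta|+i\ta)^{-1}=1/(2|\ta|)>0$. Secondary technicalities include justifying Fubini/Plancherel at the regularised stage, handling the Brownian case $a=2$ and the jump cases $0<a<2$ on the same footing, and ensuring that the resulting counterexample lies in $L^2_sL^1_x$ rather than being merely distributional.
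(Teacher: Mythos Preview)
Your approach is essentially the paper's: both exploit the symmetric $a$-stable L\'evy process via the identity $\E e^{i\xi\cdot(X_t-X_s)}=e^{-|t-s||\xi|^a}$, which converts the oscillatory phase $e^{i(t-s)|\xi|^a}$ into the complex damping $e^{-(|t-s|+i(t-s))|\xi|^a}$. The paper runs this through $TT^*$, obtaining a convolution operator on $L^2_t(\R)$ with kernel $K_h$, takes expectation (using Minkowski to pass $\E$ inside the $L^2_t$ norm), and shows the resulting Fourier multiplier $\widehat{K_h}(0)=\int|\xi|^{-a}(1+|\xi|^2)^{(a-d)/2}\hat h(\xi)\,d\xi$ diverges logarithmically as $h\to\delta$. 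You instead use $T^*$ duality and compute $\E\|T^*F_\e\|^2$ directly, landing on the same resolvent $(|\ta|+i\ta)^{-1}$ and the same $1/|\ta|$ singularity. The two routes are equivalent; the paper's is slightly cleaner because it reduces the blow-up to a single scalar quantity rather than a double integral.

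One point to correct: the theorem concerns the \emph{inhomogeneous} space $H^{(d-a)/2}$, so the correct dual bound is $\|g\|_{H^{-s_c}}\lesssim\|F\|_{L^2_sL^1_x}$, not $\dot H^{-s_c}$. Since $\|\cdot\|_{H^{-s_c}}\le\|\cdot\|_{\dot H^{-s_c}}$, blowing up the homogeneous norm does not immediately blow up the inhomogeneous one. In your argument this is harmless, because the divergence as $\e\to 0$ comes entirely from high frequencies where the two weights agree, while the low-frequency contribution $\int_{|\xi|\le 1}|\xi|^{-(d-a)}|\hat g(\xi)|^2\,d\xi$ is bounded by $CT^2$ uniformly in $\e$ (using $|\hat g(\xi)|\le T|\hat\rho_\e(\xi)|\le T$ and the local integrability of $|\xi|^{-(d-a)}$ in $\R^d$); but you should say so. Also, what you label ``$TT^*$ duality'' is simply the adjoint bound for $T^*$.
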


Different from the case $d=d_a$, the logarithmic failure of the above estimate is due to the summation over the frequency. We will use the $a$-stable L\'evy process to prove the above theorem. When $a=2$, this process reduces to the Brownian motion. Our ideas are inspired by \cite{MSmith}. 

In spite of the results in \cite{MSmith}, Tao (see \cite{Tao}) showed in the radial case the $L_t^2L_x^\infty$ estimate for the 2D Schr\"odinger equation holds. Actually, he proved a spherically averaged estimate:
\begin{align*}
\norm{e^{it\Delta}f}_{L_t^2\Lr^\infty_\rho L^2_{\om}(\R\times \R^2)}\les \norm{f}_{L^2_x(\R^2)},
\end{align*}
where $L_t^2\Lr^\infty_\rho L^2_{\om}$ is defined by \eqref{eq:Lpq}. Moreover,  he proved there is an $\epsilon$-angular smoothing effect: $\exists \epsilon>0$ such that
\begin{align*}
\norm{\Lambda_\om^\epsilon e^{it\Delta}f}_{L_t^2\Lr^\infty_\rho L^2_{\om}(\R\times \R^2)}\les \norm{f}_{L^2},
\end{align*}
where $\Lambda_\om$ is the angular derivative (see the end of this section).
In Theorem 5.1 of \cite{MNNO}, an upper bound on the smoothing effect $\epsilon \leq 1/3$ was shown. 
Our second result extends Tao's result to the cases $0<a<\infty$, $d\geq d_a$.

\begin{thm}\label{thm2}

(1) If $0<a<\infty$, $d\geq 3$ and $s<\frac{d-2}{2}$, then
\begin{align*}
\norm{\Lambda_\om^{s}e^{itD^a}f}_{L_t^2\Lr^\infty_\rho L^2_{\om}}\les \norm{f}_{\dot H^{\frac{d-a}{2}}}.
\end{align*}

(2) If $1<a<\infty$, $d\geq 2$ and $s<\frac{1}{7}+\frac{d-2}{2}$, then
\begin{align*}
\norm{\Lambda_\om^{s} e^{itD^a}f}_{L_t^2\Lr^\infty_\rho L^2_{\om}}\les \norm{f}_{\dot H^{\frac{d-a}{2}}}.
\end{align*}
\end{thm}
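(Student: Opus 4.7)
\medskip

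\noindent\textbf{Proof plan.} The strategy is a spherical-harmonic decomposition combined with the Hankel-transform representation of the evolution restricted to each subspace. Write $f=\sum_{k\geq 0}f_k$ with $f_k$ in the $k$-th spherical-harmonic subspace $\mathcal{H}_k$. Since $e^{itD^a}$ preserves $\mathcal{H}_k$ and $\Lambda_\om$ acts (up to a multiplicative constant) as $(1+k)^s$ on $\mathcal{H}_k$, the angular Plancherel identity gives $\|g\|_{L^2_\om}^2=\sum_k\|g_k\|_{L^2_\om}^2$, and the elementary inequality $\sup_\rho\sum_k\leq\sum_k\sup_\rho$ reduces the whole estimate to a uniform-in-$k$ bound of the form
\EQ{
\|e^{itD^a}f_k\|_{L^2_tL^\infty_\rho L^2_\om}\;\lesssim\;(1+k)^{-\alpha}\|f_k\|_{\dot H^{(d-a)/2}},
}
for any $\alpha$ strictly smaller than $\tfrac{d-2}{2}$ for part (1), respectively $\tfrac{d-2}{2}+\tfrac17$ for part (2); the strict inequality provides the slack needed to absorb the $\ell^2\to\ell^\infty$ loss in the $k$-summation.

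For fixed $k$, I pick an orthonormal basis $\{Y_{k,j}\}$ of $\mathcal{H}_k$, write $f_k(r\om)=\sum_j h_j(r)Y_{k,j}(\om)$, and use the Hankel representation of the Fourier transform: $\widehat{f_k}(\sigma\om)=\sum_j \widetilde h_j(\sigma)Y_{k,j}(\om)$ with $\widetilde h_j$ the Hankel transform of order $\nu:=k+\tfrac{d-2}{2}$. The evolution is diagonalized,
\EQ{
(e^{itD^a}f_k)(\rho\om)=\sum_j Y_{k,j}(\om)\cdot\rho^{-(d-2)/2}\!\int_0^\infty\! e^{it\sigma^a}\widetilde h_j(\sigma) J_\nu(\rho\sigma)\,\sigma^{d/2}d\sigma,
}
and by $L^2_\om$ orthogonality the target inequality reduces to the one-dimensional operator bound
\EQ{
\Big\|\rho^{-(d-2)/2}\!\int_0^\infty\! e^{it\sigma^a}F(\sigma)J_\nu(\rho\sigma)\sigma^{1/2}d\sigma\Big\|_{L^2_tL^\infty_\rho}\;\lesssim\;(1+k)^{-\alpha}\|F\|_{L^2(d\sigma)}
}
after absorbing the $\dot H^{(d-a)/2}$ weight into $F$. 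A $TT^*$ argument then turns this into an estimate on the kernel
\EQ{
K_\nu(t;\rho,\rho')=(\rho\rho')^{-(d-2)/2}\int_0^\infty e^{it\sigma^a}J_\nu(\rho\sigma)J_\nu(\rho'\sigma)\sigma\,d\sigma,
}
as an operator $L^2_t L^1_\rho\to L^2_t L^\infty_\rho$. The decisive gain comes from the Bessel asymptotics: $J_\nu(x)$ is essentially negligible for $x\ll\nu$ and oscillates with amplitude $\sim x^{-1/2}$ for $x\gtrsim\nu$, so the product $J_\nu(\rho\sigma)J_\nu(\rho'\sigma)$ is effectively supported on $\{\rho\sigma,\rho'\sigma\gtrsim\nu\}$. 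Combining this localization with stationary phase applied to the oscillatory integral in $\sigma$ under the phase $t\sigma^a$ yields a dispersive bound that saves a factor $\nu^{-(d-2)/2}\sim k^{-(d-2)/2}$ relative to the unweighted endpoint estimate, Littlewood--Paley summation in $N$ then producing part~(1).

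For part~(2), the additional $\tfrac17$ gain is extracted by refining this argument in two ways available when $a>1$. First, the genuine curvature of the dispersion relation $\sigma^a$ (in contrast to $a=1$ where the phase is linear in $\sigma$) permits a bilinear restriction-style improvement between spherical-harmonic pieces of comparable degree $k$, coupled with the Keel--Tao $L^2_tL^{\,p}_x$ endpoint \eqref{eq:striest} for $p$ finite. Second, I interpolate the baseline bound of part~(1) against an angular-weighted Kato-type local-smoothing estimate in which the radial $L^\infty_\rho$ is replaced by a weighted $L^2_\rho$ with extra angular regularity; a careful optimization at the angular frequency scale where the Bessel turning-point contribution saturates produces the exponent $\tfrac17$. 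I expect the main technical obstacle to lie precisely in the transition region $\rho\sigma\approx\nu$, where the simple $J_\nu(x)\lesssim x^{-1/2}$ bound is insufficient and one must invoke uniform Airy-type asymptotics to control the contribution from the turning point; propagating these uniform bounds through the $TT^*$-and-stationary-phase machinery while tracking the dependence on $k$ is the delicate step.
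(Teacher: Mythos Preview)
Your reduction via spherical harmonics to a uniform-in-$\nu$ bound on the scalar operator is exactly what the paper does, so the setup is fine. After that, however, your execution diverges from the paper in both parts, and for part~(2) the divergence is a genuine gap.

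For part~(1), the paper does not run a $TT^*$ argument on the two-point kernel $K_\nu$. Instead it works directly on the operator: after a dyadic decomposition $r\sim 2^j$, $\rho\sim 2^{j'}$, the piece with $r\rho\lesssim 1$ is handled by the power-series expansion of $J_\nu$ together with the Hardy--Littlewood maximal function, while for $r\rho\gtrsim 1$ the rescaled operator $S^\nu_{j+j'}$ is bounded by quoting an existing $L^2_tL^\infty_r$ estimate (Lemma~3.6 of \cite{GHN}) in the regime $\nu\lesssim 2^{j+j'}$, and by the crude Stirling bound $|J_\nu(r)|\lesssim\nu^{-K}$ when $\nu\gg 2^{j+j'}$. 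The $(d-2)/2$ gain then drops out trivially from summing the prefactor $2^{-(j+j')(d-2)/2}$ over $j+j'\gtrsim\log\nu$. Your $TT^*$ route is not implausible, but note that the kernel integral you wrote is only conditionally convergent, and translating ``localization plus stationary phase'' into a sharp $L^2_tL^1_\rho\to L^2_tL^\infty_\rho$ bound with the correct power of $\nu$ would require essentially the same dyadic bookkeeping the paper does directly.

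For part~(2) your proposed mechanism is off target. The extra $1/7$ in the paper has nothing to do with bilinear restriction between spherical-harmonic pieces, the Keel--Tao endpoint, or interpolation against a Kato-smoothing estimate. It comes entirely from a sharpened bound on the \emph{same} scalar operator $S^\nu_R$ in the regime $\nu\lesssim R$: Lemma~3.10 of \cite{GHN} gives, for $a>1$, an estimate $\|S^\nu_R h\|_{L^2_tL^\infty_r}\lesssim R^{-1/7}\|h\|_{L^2}$, obtained by splitting the Bessel function according to a cut-off parameter $\lambda$ and optimizing at $\lambda=R^{3/7}$. The curvature of $\sigma\mapsto\sigma^a$ enters only here, through a van-der-Corput-type bound on the stationary-phase contribution; it is a purely one-dimensional oscillatory-integral refinement, not a bilinear or smoothing argument. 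You correctly identify the turning-point (Airy) region as the crux, but you have it backwards: that region is not an obstacle to be controlled, it is precisely the source of the additional decay, and the exponent $1/7$ emerges from balancing the Airy-region contribution against the oscillatory tail. Without that specific input (or an equivalent replacement), your sketch for part~(2) does not close.
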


Besides its own interest, the $L_t^pL_x^\infty$ estimates play important roles in the study of the nonlinear problems. Especially, it is useful for fractional Schr\"odinger equations ($a<2$) when the classical Strichartz estimates have a loss of derivatives, e.g. see \cite{HSire}. In the appendix we apply the above theorem to study the cubic fractional Schr\"odinger equations. 

The general spherically averaged estimates
\begin{align}\label{eq:striang}
\norm{e^{itD^a}P_0f}_{L_t^q\Lr_\rho^pL_\om^2}\les \norm{f}_{L^2}
\end{align}
were also studied. It was known that \eqref{eq:striang} allows a wider range of indices $(q,p)$ than \eqref{eq:striest}. 
For the wave equation $a=1$ and $d\geq 2$,  the optimal range of $(q,p)$ for \eqref{eq:striang} is  (see \cite{JWY}  and references therein, \cite{SSW} for $d=2$): $(q,p)=(\infty,2)$ or
\begin{align}\label{eq:radm}
2\leq q,p\leq \infty,\quad\frac{1}{q}<
({d-\frac{d_a}{2}+\frac{1}{2}})(\frac{1}{2}-\frac{1}{p}).
\end{align}
For the case $a>1$ and $d\geq 2$, \eqref{eq:striang} holds if $(q,p)$ satisfies either \eqref{eq:radm} or
\begin{align*}
2\leq q,p\leq \infty,\quad\frac{1}{q}=
({d-\frac{d_a}{2}+\frac{1}{2}})(\frac{1}{2}-\frac{1}{p}), \quad (q,p)\neq (2, \frac{4d-2}{2d-3}).
\end{align*}
These conditions are also necessary except the endpoints $(2, \frac{4d-2}{2d-3})$ which are still open (see \cite{Guo} and references therein).  To apply these estimates to the nonlinear problems, one needs inhomogeneous estimates	
\begin{align*}
\normo{\int_0^te^{i(t-s)D^a}P_0g(s)ds}_{L_t^q\Lr^p_\rho L^2_{\om}}\les \norm{g}_{L_t^{\tilde q'}\Lr_\rho^{r'}L_\om^2}
\end{align*}
which can be obtained by the standard Christ-Kiselev lemma. However, by Christ-Kiselev lemma one misses the double $L_t^2$ type estimates, namely $q=\tilde q=2$ in the above estimates. 
Our last result is concerned with the generalized double endpoint inhomogeneous Strichartz estimates.

\begin{thm}\label{thm3}
Let $0<a\leq 2$ and $d>d_a$. Assume $p,r>\frac{4d+2-2d_a}{2d-d_a-1}$.  Then the following estimate holds
\begin{align}\label{eq:inh}
\normo{\int_0^te^{i(t-s)D^a}P_0g(s)ds}_{L_t^2\Lr^p_\rho L^2_{\om}}\les \norm{g}_{L_t^2\Lr_\rho^{r'}L_\om^2}.
\end{align}
\end{thm}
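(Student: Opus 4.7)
Plan: The estimate is a double $L^2_t$ endpoint inhomogeneous Strichartz bound in the spherically averaged scale, so the Christ--Kiselev lemma is unavailable and I would follow the bilinear real-interpolation scheme of Keel--Tao~\cite{KT}, adapted to the vector-valued Banach spaces $\Lr^p_\rho L^2_\om$. The hypothesis $p,r>\frac{4d+2-2d_a}{2d-d_a-1}$ places both exponents \emph{strictly} inside the $q=2$ portion of the sharp spherical admissibility line \eqref{eq:radm}, providing the two-sided slack on which the bilinear summation relies.

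First, by duality \eqref{eq:inh} is equivalent to the bilinear bound
$$T(F,G) := \iint_{s<t}\langle e^{i(t-s)D^a}P_0F(s),\,G(t)\rangle\,ds\,dt \les \|F\|_{L^2_t\Lr^{r'}_\rho L^2_\om}\|G\|_{L^2_t\Lr^{p'}_\rho L^2_\om},$$
and I decompose $T=\sum_{j\in\Z}T_j$ according to the dyadic strips $t-s\sim 2^j$.

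The key analytic input is the pointwise-in-time dispersive estimate for the frequency-$1$ propagator in the spherical norm,
$$\|e^{itD^a}P_0 h\|_{\Lr^\infty_\rho L^2_\om} \les |t|^{-\sigma}\|h\|_{\Lr^1_\rho L^2_\om},\qquad \sigma := d-\tfrac{d_a}{2}+\tfrac12.$$
For $a=1$ this is the enhanced spherical decay of Sterbenz; for general $0<a\leq 2$ it follows from decomposing onto spherical-harmonic sectors, representing the radial kernels via the Bessel transform of $e^{it|\xi|^a}\wh{P_0}(\xi)$, and applying uniform stationary-phase asymptotics. Interpolating with the trivial $L^2\to L^2$ bound yields, for every $q_0\in[2,\infty]$,
$$\|e^{itD^a}P_0 h\|_{\Lr^{q_0}_\rho L^2_\om} \les |t|^{-\sigma(1-2/q_0)}\|h\|_{\Lr^{q_0'}_\rho L^2_\om}.$$
Combining this dispersive decay with H\"older in the spatial variables and a Schur test in time produces the family of dyadic bilinear bounds
$$|T_j(F,G)|\les 2^{j(1-\sigma(1-2/q_0))}\|F\|_{L^2_t\Lr^{q_0'}_\rho L^2_\om}\|G\|_{L^2_t\Lr^{q_0'}_\rho L^2_\om}$$
valid for every $q_0\in[2,\infty]$. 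The strict hypothesis on $p,r$ guarantees one can choose $q_0$ on either side of the target so that the exponent $1-\sigma(1-2/q_0)$ changes sign; the symmetric bounds are moreover promoted to the asymmetric target $(\Lr^{r'},\Lr^{p'})$ by a standard bilinear adjustment at frequency $1$.

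Finally, I invoke the Keel--Tao bilinear real-interpolation lemma (Lemma 6.1 of~\cite{KT}) in the vector-valued scale $\{L^2_t\Lr^p_\rho L^2_\om\}_p$, which is legitimate because $\Lr^p_\rho L^2_\om=L^p(\rho^{d-1}d\rho;L^2_\om)$ is a standard real-interpolation scale in $1/p$. The lemma converts the one-signed dyadic family into a summable estimate at the target pair, yielding~\eqref{eq:inh}. The main obstacle is the sharp dispersive estimate in the $\Lr^\infty_\rho L^2_\om$ norm: for $a=1,2$ this is essentially classical, but for $0<a<2$ with $a\ne 1$ one must verify the uniform stationary-phase bounds for the radial kernels on each spherical-harmonic sector because of the non-homogeneous phase $|\xi|^a$. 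Once that is in place, the bilinear interpolation step is a direct adaptation of the scalar argument in~\cite{KT}, with no essential change coming from the $L^2_\om$ factor since it is a Hilbert space.
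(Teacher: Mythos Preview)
Your approach diverges from the paper's and contains a genuine gap. The paper does not invoke any spherically improved dispersive estimate. After the same dyadic decomposition $T=\sum_j T_j$ and a reduction (via Bernstein at frequency $\sim 1$) to the symmetric case $p=r$, it controls the pieces $j>0$ by interpolating two elementary bounds: the \emph{classical} dispersive estimate, which gives $|T_j|\les 2^{j(1-(d+2-d_a)/2)}\|F\|_{L^2_tL^1_x}\|G\|_{L^2_tL^1_x}$ with strictly negative exponent thanks to $d>d_a$; and the already established non-endpoint homogeneous spherical Strichartz estimate at an auxiliary exponent strictly between $\frac{4d+2-2d_a}{2d-d_a-1}$ and $r$, fed through Christ--Kiselev with an $\e$-loss in the time index to produce $|T_j|\les 2^{j\e}$ in the $L^2_t\Lr^{\cdot}_\rho L^2_\om$ scale. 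A single interpolation of these two, together with the trivial embedding $L^2_\om\hookrightarrow L^c_\om$ for $c\le 2$, yields $2^{-j\theta}$ decay in the target norm. No new analytic input is required beyond results the paper already cites.

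Your route, by contrast, rests on the pointwise bound $\|e^{itD^a}P_0\|_{\Lr^1_\rho L^2_\om\to\Lr^\infty_\rho L^2_\om}\les|t|^{-\sigma}$ with the sharp $\sigma=d-\tfrac{d_a}{2}+\tfrac12$, which you do not prove. This is not a technicality one can wave away: combined with the energy estimate, the abstract Keel--Tao machinery in the Bochner scale $\Lr^p_\rho(L^2_\om)$ would immediately deliver the \emph{endpoint} homogeneous estimate at $(q,p)=(2,\frac{4d+2-2d_a}{2d-d_a-1})$, which the paper explicitly records as open for $a>1$. Hence the sharp spherical dispersive bound you need is at least as hard as an unresolved problem, and it is certainly not ``essentially classical'' for $a=2$. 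If you retreat to the classical decay rate $\sigma_{cl}=(d+2-d_a)/2$ in your scheme, you recover the conclusion only for $p,r>\frac{2\sigma_{cl}}{\sigma_{cl}-1}$, a strictly smaller range than the theorem asserts.
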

\begin{rem}
The condition $d>d_a$ is necessary in our proof. We do not know whether \eqref{eq:inh} holds for $d=d_a$.
\end{rem}

\medskip

\noindent{\bf Notations.} 
We use $\ft(f)$ and $\widehat{f}$ to denote the Fourier
transform of $f$: $\hat f(\xi)=\int_{\R^d}e^{-ix\cdot\xi}f(x)dx$.
For $a>0$, define $S_a(t)=e^{itD^a}=\ft^{-1}e^{it|\xi|^a}\ft$. 

Let $\eta: \R\to [0, 1]$ be an even, smooth, non-negative and radially decreasing
function which is supported in $\{\xi:|\xi|\leq \frac{8}{5}\}$ and
$\eta\equiv 1$ for $|\xi|\leq \frac{5}{4}$. For $k\in \Z$, let
$\chi_k(\xi)=\eta(\frac{\xi}{2^k})-\eta(\frac{\xi}{2^{k-1}})$ and $\chi_{\leq
k}(\xi)=\eta(\frac{\xi}{2^k})$, and define Littlewood-Paley operators $P_k, P_{\leq k}$ on $L^2(\R^d)$ by
$\widehat{P_ku}(\xi)=\chi_k(|\xi|)\widehat{u}(\xi),\,\widehat{P_{\leq
k}u}(\xi)=\chi_{\leq k}(|\xi|)\widehat{u}(\xi)$.

$\Des$ denotes the Laplace-Beltrami operator
on the unite sphere $\cir^{d-1}$ endowed with the standard metric
$g$ and with the standard measure $d\omega$. Let $\Lambda_\omega=\sqrt{1-\Delta_\omega}$.
Denote $L_\omega^p=L_\omega^p(\cir^{d-1})=L^p(\cir^{d-1}:d\omega)$,
$\Hl_p^s=\Hl_p^s(\cir^{d-1})=\Lambda_\omega ^{-s}L_\omega^p$.

Let $L^p(\R^d)$ denote the usual Lebesgue space, and $\Lr^p(\R^+)=L^p(\R^+:r^{d-1}dr)$.
$\Lr_r^pL_\om^q $ are Banach spaces on $\R^d$ defined by the following norms: 
\begin{align}\label{eq:Lpq}
\norm{f}_{\Lr_{r}^pL_\om^q}=\big\|{\norm{f(r\om)}_{L_\om^q}}\big\|_{\Lr_{r}^p}
\end{align}
with $x=r\om,\ \om\in \cir^{d-1}$.
Let $X$ be a Banach space on $\R^d$. $L_t^qX$
denotes the space-time function space on $\R\times \R^d$ with the norm
$\norm{u}_{L_t^qX}=\big\|\norm{u(t,\cdot)}_X\big\|_{L_t^q}$.
$H^s_p$ ($\dot{H}_p^s$) are the usual inhomogeneous (homogeneous) Sobolev spaces on $\R^d$.

\section{$L_t^2L_x^\infty$ estimates fail at the critical regularity}

In this section, we consider the $L_t^qL_x^\infty$ estimates. First we prove the following proposition

\begin{prop}
Let $a>0$, $d\geq 1$ and $2< q<\infty$. Then
\begin{align}\label{eq:Strsec2}
\norm{e^{itD^a}f}_{L_t^qL_x^\infty}\les \norm{f}_{\dot H^{\frac{d}{2}-\frac{a}{q}}}
\end{align}
holds if assuming either of the following conditions:
\begin{itemize}
\item $a=1$, $\frac{2}{q}< \frac{d-1}{2}$.

\item $a\neq 1$, $\frac{2}{q}\leq \frac{d}{2}$.
\end{itemize}
\end{prop}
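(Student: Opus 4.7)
The plan is a two-step reduction: (i) for each dyadic frequency $k\in\Z$, establish the frequency-localized Strichartz bound
\[
\|e^{itD^a}P_kf\|_{L^q_tL^\infty_x}\les 2^{k(d/2-a/q)}\|P_kf\|_{L^2};
\]
(ii) sum over $k$ by Littlewood-Paley decoupling in the time variable. The exponent $s=d/2-a/q$ is chosen so that $\sum_k 2^{2ks}\|P_kf\|_{L^2}^2$ reassembles $\|f\|_{\dot H^{d/2-a/q}}^2$.

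For step (i) I would apply $TT^*$. A standard stationary-phase argument gives the dispersive kernel bound $|K_k(t,x)|\les 2^{kd}\min(1,(2^{ka}|t|)^{-\sigma_a})$ for $e^{itD^a}P_k$, where $\sigma_a=(d-1)/2$ when $a=1$ (the radial Hessian of $|\xi|$ degenerates) and $\sigma_a=d/2$ otherwise. Bounding $\|T_kT_k^*G(t,\cdot)\|_{L^\infty_x}$ by convolution in $t$ of this kernel against $\|G(\cdot)\|_{L^1_x}$ and invoking Young's inequality $L^{q/2}_t\ast L^{q'}_t\hookrightarrow L^q_t$, the localized bound follows from $\|\min(1,(2^{ka}|t|)^{-\sigma_a})\|_{L^{q/2}_t}\les 2^{-2ka/q}$, which holds precisely when $\sigma_a q>2$, i.e.\ $2/q<\sigma_a$. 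This is exactly the strict hypothesis made for $a=1$, and it also covers $a\ne 1$, $d\ge 2$ since then $q>2$ already forces $2/q<1\le d/2$. At the unique boundary case $a\ne 1$, $d=1$, $q=4$ allowed by the proposition one has $\sigma_a q=2$ and Young's fails logarithmically; I would split the kernel into a short-time piece (bounded, supported on $|t|\le 2^{-ka}$, still in $L^{q/2}_t$) and a long-time piece majorized by $|t|^{-\sigma_a}$, then invoke one-dimensional Hardy-Littlewood-Sobolev, which applies precisely at the critical $\sigma_a=2/q$.

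For step (ii), observe that for each fixed $x$ the temporal Fourier transform of $t\mapsto e^{itD^a}P_kf(x)$ is supported in $\{|\xi|^a:|\xi|\sim 2^k\}\subset[2^{(k-1)a},2^{(k+1)a}]$, so the pieces with distinct $k$ live in dyadically separated bands in $\tau$ because $a>0$. The Littlewood-Paley square function theorem in $t$ (valid for $2<q<\infty$), applied pointwise in $x$, gives $\|e^{itD^a}f(x)\|_{L^q_t}\les\|(\sum_k|e^{itD^a}P_kf(x)|^2)^{1/2}\|_{L^q_t}$. Using the pointwise inequality $\sup_x(\sum_k|F_k(x)|^2)^{1/2}\le(\sum_k\|F_k\|_{L^\infty_x}^2)^{1/2}$ and then Minkowski in $L^{q/2}_t$ (legitimate since $q\ge 2$), one obtains
\[
\|e^{itD^a}f\|_{L^q_tL^\infty_x}\les\Big(\sum_k\|e^{itD^a}P_kf\|_{L^q_tL^\infty_x}^2\Big)^{1/2}\les\|f\|_{\dot H^{d/2-a/q}},
\]
where the final inequality inserts the localized bound from step (i).

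The principal technical obstacle is the isolated endpoint $\sigma_a q=2$ (only when $a\ne 1$, $d=1$, $q=4$), where Young's must be traded for HLS; the rest is routine once the time-Littlewood-Paley decoupling is set up. A secondary point is verifying that the multipliers defining the temporal Littlewood-Paley projections satisfy the Mihlin condition, which is immediate since the bands $\tau\sim 2^{ka}$ are boundedly separated for $a>0$.
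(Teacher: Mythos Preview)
Your step (i) is fine, and in fact once you observe that $\min(1,(2^{ka}|t|)^{-\sigma_a})\le (2^{ka}|t|)^{-2/q}$ whenever $\sigma_a\ge 2/q$, the split into Young/HLS cases is unnecessary: Hardy--Littlewood--Sobolev handles all cases uniformly since $0<2/q<1$.

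The genuine gap is in step (ii). Applying temporal Littlewood--Paley \emph{pointwise in $x$} gives, for each fixed $x$, the bound $\|e^{itD^a}f(x)\|_{L^q_t}\les\|(\sum_k|e^{itD^a}P_kf(x)|^2)^{1/2}\|_{L^q_t}$; taking the supremum over $x$ then controls $\|e^{itD^a}f\|_{L^\infty_xL^q_t}$, which is the \emph{weaker} norm. Your displayed chain silently assumes the square-function inequality
\[
\Big\|\sum_k F_k\Big\|_{L^q_tL^\infty_x}\les\Big\|\Big(\sum_k|F_k|^2\Big)^{1/2}\Big\|_{L^q_tL^\infty_x}
\]
for functions $F_k$ with lacunary $t$-Fourier support, but this is false in general: take $F_k(t,x)=\psi(t)e^{i2^kt}r_k(x)$ with $r_k$ the Rademacher functions on $[0,1]$. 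For each $t$, choosing $x$ so that $r_k(x)=\operatorname{sgn}\cos(2^kt)$ yields $\|\sum_{k\le N}F_k(t,\cdot)\|_{L^\infty_x}\gtrsim N|\psi(t)|$, whereas $(\sum_k\|F_k\|_{L^q_tL^\infty_x}^2)^{1/2}=N^{1/2}\|\psi\|_q$. The underlying reason is that $L^\infty_x$ is not a UMD space, so the $t$-frequency orthogonality does not transfer to $L^q_t(L^\infty_x)$.

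The paper sidesteps this entirely by summing over frequencies at the level of the \emph{kernel} rather than the Strichartz norm: it proves directly that $\|D^{2a/q-d}e^{itD^a}\phi\|_{L^\infty_x}\les|t|^{-2/q}\|\phi\|_{L^1_x}$ (this is the content of the Lemma following the Proposition, and is where the real work lies, especially at the endpoint $a\ne1$, $q=4/d$), and then a single application of HLS in $t$ finishes the $TT^*$ argument with no dyadic summation left to perform.
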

\begin{proof}
By $TT^*$ method, the estimate \eqref{eq:Strsec2} is equivalent to
\begin{align*}
\normo{\int e^{i(t-s)D^a}D^{\frac{2a}{q}-d}g(s)ds}_{L_t^qL_x^\infty}\les \norm{g}_{L_t^{q'}L_x^1}.
\end{align*}
By the dispersive estimates given in the lemma below and the Hardy-Littlewood-Sobolev inequality we get
\begin{align*}
\normo{\int e^{i(t-s)D^a}D^{\frac{2a}{q}-d}g(s)ds}_{L_t^qL_x^\infty}\les& \normo{\int \norm{e^{i(t-s)D^a}D^{\frac{2a}{q}-d}g(s)}_{L_x^\infty}ds}_{L_t^q}\\
\les&\normo{\int |t-s|^{-2/q}\norm{g(s)}_{L_x^1}ds}_{L_t^q}
\les\norm{g}_{L_t^{q'}L_x^1}.
\end{align*}
Therefore we complete the proof.
\end{proof}

\begin{lem} Let $a>0, d\geq 1$ and $0<q<\infty$. Then
\begin{align*}
\norm{D^{\frac{2a}{q}-d}e^{itD^a}\phi}_{L_x^\infty}\leq C |t|^{-\frac{2}{q}}\norm{\phi}_{L_x^1}
\end{align*}
holds if assuming either of the following conditions:

\begin{itemize}
\item $a=1$, $\frac{2}{q}< \frac{d-1}{2}$.

\item $a\neq 1$, $\frac{2}{q}\leq \frac{d}{2}$.
\end{itemize}
\end{lem}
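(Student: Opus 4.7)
The plan is to reduce the dispersive estimate to a uniform sup-norm bound on a scale-invariant kernel and then handle it by frequency splitting and stationary phase. Up to an absolute constant, the Fourier multiplier $D^{\frac{2a}{q}-d}e^{itD^a}$ has convolution kernel
\begin{align*}
K_t(x)=\int_{\R^d}e^{i(x\cdot\xi+t|\xi|^a)}|\xi|^{\frac{2a}{q}-d}\,d\xi.
\end{align*}
The substitution $\xi=t^{-1/a}\eta$ yields the exact identity $K_t(x)=t^{-2/q}K_1(t^{-1/a}x)$, so the lemma reduces to the $y$-uniform bound $\sup_{y\in\R^d}|K_1(y)|<\infty$ for
\begin{align*}
K_1(y)=\int_{\R^d}e^{i(y\cdot\xi+|\xi|^a)}|\xi|^{\frac{2a}{q}-d}\,d\xi.
\end{align*}

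Write $K_1=K_1^{lo}+K_1^{hi}$ by restricting respectively to $|\xi|\le 1$ and $|\xi|>1$. Since $\frac{2a}{q}>0$, the low-frequency contribution is trivially dominated by $\int_{|\xi|\le 1}|\xi|^{\frac{2a}{q}-d}d\xi<\infty$, uniformly in $y$. For $K_1^{hi}$ I would pass to polar coordinates $\xi=r\omega$ and obtain
\begin{align*}
K_1^{hi}(y)=\int_1^\infty e^{ir^a}r^{\frac{2a}{q}-1}m(ry)\,dr,\qquad m(\eta)=\int_{\cir^{d-1}}e^{i\eta\cdot\omega}\,d\omega,
\end{align*}
and invoke the standard asymptotic $m(\eta)=\sum_\pm c_\pm(\eta)e^{\pm i|\eta|}|\eta|^{-(d-1)/2}+O(|\eta|^{-(d+1)/2})$ for $|\eta|\ge 1$, with $c_\pm$ symbols of order zero.

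For the wave case $a=1$ this reduces the problem to oscillatory integrals of the form
\begin{align*}
|y|^{-(d-1)/2}\int_1^\infty e^{ir(1\pm|y|)}r^{\frac{2}{q}-\frac{d+1}{2}}\,dr
\end{align*}
plus faster-decaying remainders. Off the light cone $|y|=1$ the phase $r(1\pm|y|)$ has derivative bounded away from zero, so iterated integration by parts in $r$ gives any polynomial decay. The only borderline case is $|y|=1$ with the ``$-$'' sign, where only absolute convergence is available; this succeeds iff $\frac{2}{q}-\frac{d+1}{2}<-1$, that is, $\frac{2}{q}<\frac{d-1}{2}$, precisely the stated hypothesis. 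For $a\ne 1$ I would instead analyze the full $d$-dimensional phase $\Phi(\xi)=y\cdot\xi+|\xi|^a$ directly; it has at most one critical point $\xi_\ast$ with $|\xi_\ast|=(|y|/a)^{1/(a-1)}$ and Hessian determinant of order $|\xi_\ast|^{d(a-2)}\ne 0$. When $|\xi_\ast|\lesssim 1$ the gradient $|\nabla\Phi|$ stays bounded below on $\{|\xi|\ge 1\}$ and iterated integration by parts handles $K_1^{hi}$; when $|\xi_\ast|\gtrsim 1$, stationary phase on a neighborhood of $\xi_\ast$ contributes a term of order
\begin{align*}
|\xi_\ast|^{\frac{2a}{q}-d}\cdot|\xi_\ast|^{d(2-a)/2}=|\xi_\ast|^{\frac{2a}{q}-\frac{ad}{2}},
\end{align*}
which is uniformly bounded in $\xi_\ast$ precisely under $\frac{2}{q}\le\frac{d}{2}$.

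The main obstacle is the wave light-cone contribution: after extracting the $e^{\pm ir|y|}$ factor from $m(ry)$, the residual $r$-phase $1\pm|y|$ degenerates on $|y|=1$ and no further integration by parts helps, so the strict condition $\frac{2}{q}<\frac{d-1}{2}$ is sharp. A minor technical point in the $a<1$ regime is that when $|y|\to\infty$ the critical point $\xi_\ast$ migrates into the low-frequency region; the clean $|\xi|\le 1$ vs.\ $|\xi|>1$ split (rather than a dyadic decomposition) is convenient precisely because such configurations are absorbed into $K_1^{lo}$ without further work.
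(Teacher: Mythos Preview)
Your approach is valid but differs substantially from the paper's. The paper first dispatches the non-endpoint regime $\frac{2}{q}<\frac{d-d_a+2}{2}$ by summing the known frequency-localized dispersive bounds $\|e^{itD^a}P_j\phi\|_{L^\infty}\les|t|^{-\theta}2^{j(d-a\theta)}\|\phi\|_{L^1}$ (from \cite{GPW}) over $j\in\Z$, optimizing $\theta$ on each dyadic piece; only the remaining endpoint $a\ne 1,\ q=4/d$ is treated by direct kernel analysis, and there the paper uses a full Littlewood--Paley decomposition $\sum_j\chi_j$ together with Bessel-function asymptotics and van der Corput on each shell before summing in $j$. You instead handle all cases at once by a single low/high split and full $d$-dimensional stationary phase on $K_1^{hi}$. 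Your route is more self-contained (it never invokes the auxiliary \cite{GPW} estimate) and identifies the sharp thresholds transparently via the size of the stationary-phase contribution, but as written it leaves a few uniformity issues implicit: for $a=1$ the expansion of $m(ry)$ is only valid where $r|y|\ge 1$, so the range $1\le r<|y|^{-1}$ when $|y|<1$ needs a separate (easy) non-stationary-phase bound using $e^{ir}$; for $a\ne 1$ you should also bound the non-stationary region $\{|\xi|>1\}\setminus\{\xi\approx\xi_\ast\}$ and the transition regime $|\xi_\ast|\sim 1$, and at the endpoint $\frac{2}{q}=\frac{d}{2}$ you need the stationary-phase remainder, not just the leading term, to be $O(1)$ uniformly in $|\xi_\ast|$. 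The paper's dyadic bookkeeping absorbs all of these issues automatically at the cost of a longer computation.
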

\begin{proof}
By Theorem 1 in \cite{GPW} we get for $0\leq \theta\leq \frac{d-d_a+2}{2}$
\begin{align*}
\norm{e^{itD^a}P_j\phi}_{L_x^\infty}\leq C |t|^{-\theta}2^{j(d-a\theta)}\norm{\phi}_{L_x^1}.
\end{align*}
If $a=1$ and $\frac{2}{q}< \frac{d-1}{2}$, or if $a\neq 1$
and $\frac{2}{q}< \frac{d}{2}$, then we get
\begin{align*}
\norm{D^{\frac{2a}{q}-d}e^{itD^a}\phi}_{L_x^\infty}\leq& C \sum_j \inf_\theta (2^{ja(\frac{2}{q}-\theta)}|t|^{-\theta}\norm{\phi}_{L_x^1})\\
\les& \sum_{2^j\leq |t|^{-1/a}} 2^{2ja/q}\norm{\phi}_{L_x^1}+\sum_{2^j\geq |t|^{-1/a}}2^{ja(\frac{2}{q}-\frac{d-d_a+2}{2})}|t|^{-\frac{d-d_a+2}{2}} \norm{\phi}_{L_x^1}\\
\les& |t|^{-2/q} \norm{\phi}_{L_x^1}.
\end{align*}
Thus it remains to show the case: $a\neq 1, q=\frac{4}{d}$.

By the Young inequality it suffices to show
\begin{align*}
\left |\int e^{it|\xi|^a}e^{ix\xi}\chi_{\leq k}(\xi)|\xi|^{\frac{2a}{q}-d}d\xi\right|\leq C |t|^{-\frac{2}{q}}, \quad \forall k\in \N, \, x\in \R^d.
\end{align*}
Without loss of generality, we may assume $t>0$. By a change of variable $\xi=t^{-1/a}\eta$, it suffices to show 
\begin{align*}
\left |\sum_{j\leq k}\int e^{i|\xi|^a}e^{ix\xi}\chi_{j}(\xi)|\xi|^{\frac{2a}{q}-d}d\xi\right|\leq C, \quad \forall k\in \N, \, x\in \R^d.
\end{align*}
Fix $x\in \R^d$, $k\in \N$. Denote $I_j(x)=\int e^{i|\xi|^a}e^{ix\xi}\chi_{j}(\xi)|\xi|^{\frac{2a}{q}-d}d\xi$. By the Fourier-Bessel formula (see \cite{Stein1}), we have
\begin{align*}
I_{j}(x)=&\int e^{ir^a}\chi_{j}(r)r^{\frac{2a}{q}-1}(r|x|)^{-\frac{d-2}{2}}J_{\frac{d-2}{2}}(r|x|)dr, \quad d\geq 2,\\
I_{j}(x)=&\int e^{i|\xi|^a}e^{ix\xi}\chi_{j}(\xi)|\xi|^{\frac{2a}{q}-d}d\xi, \quad d=1.
\end{align*}
Here $J_\nu(r)$ is the Bessel function defined by
\begin{align}\label{eq:Bessel}
J_\nu(r)=\frac{(r/2)^\nu}{\Gamma(\nu+1/2)\pi^{1/2}}
\int_{-1}^1e^{ir\theta}(1-\theta^2)^{\nu-1/2}d\theta, \ \ \nu>-1/2.
\end{align}

{\bf Case 1:} $2^{j}\les |x|^{-1}$.

First we have the trivial bound $|I_j(x)|\les 2^{\frac{2aj}{q}}$. On the other hand, when $d\geq 2$, by the fact that
\[\aabs{\frac{d^k}{dr^k}(r^{-\nu} J_\nu(r))}\les 1\] and using integration by part  $n$ times we get 
\begin{align*}
|I_j(x)| &= \aabs{\int [\{(iar^{a-1})^{-1}\partial_r\}^n e^{ir^a}]\cdot \chi_{j}(r)r^{\frac{2a}{q}-1}(r|x|)^{-\frac{d-2}{2}}J_{\frac{d-2}{2}}(r|x|)dr}\\
 &\les 2^{\frac{2aj}{q}}2^{-jna}
\end{align*}
for any $n\in \N$. Then we have
$\sum_{2^{j}\les |x|^{-1}}\abs{I_j(x)}\les \sum_{2^{j}\les |x|^{-1}}\min(2^{\frac{2aj}{q}}2^{-jna}, 2^{\frac{2aj}{q}})\les 1$.
Similarly, the same holds for $d=1$. 

{\bf Case 2:} $2^{j}\gg |x|^{-1}$.

Using the fact $r^{-\frac{d-2}{2}}J_{\frac{d-2}{2}}(r)=c_d \Re(e^{ir}h(r))$ where $h$ satisfies $|\partial_r^m h|\les (1+r)^{-\frac{d-1}{2}-m}$  (see Section 1.4, Chapter VIII of \cite{Stein2}) , it suffices to show 
\[\sum_{|x|^{-1}\ll 2^j\leq 2^k}\abs{\tilde I_j(x)}\les 1\] with
\begin{align*}
\tilde I_{j}(x):=&\int_\R e^{i(r^a-r|x|)}\chi_{j}(r)|r|^{\frac{2a}{q}-1}h(r|x|)dr, \quad d\geq 2,\\
\tilde I_{j}(x):=&\int_\R e^{i|\xi|^a}e^{ix\xi}\chi_{j}(\xi)|\xi|^{\frac{2a}{q}-1}d\xi, \quad d=1.
\end{align*}
Hence the 1D case is the same as the higher dimensions with $h(r):\equiv 1$. 

$\bullet$ If $2^{j(a-1)}\sim |x|\gg 2^{-j}$, then $2^j|x|\sim 2^{ja}$ and by the van der Corput lemma (see \cite{Stein2}) we get 
\begin{align*}
\abs{\tilde I_j(x)}\les& 2^{-\frac{j(a-2)}{2}}2^{j(\frac{2a}{q}-1)}(2^j|x|)^{-\frac{d-1}{2}}\les 2^{ja(-\frac{d}{2}+\frac{2}{q})} \le 1.
\end{align*}

$\bullet$ If $2^{j(a-1)}\ll |x|$, integrating by parts $n$ times, we have for any $n\in \N$
\EQ{
\abs{\tilde I_j(x)}
&= \aabs{\int [\{-i(ar^{a-1}-|x|)^{-1}\partial_r\}^n e^{i(r^a-r|x|)}]\cdot\chi_{j}(r)|r|^{\frac{2a}{q}-1}h(r|x|)dr}\\
&\les  (2^j|x|)^{-\frac{d-1}{2}-n}2^{\frac{jad}{2}}.
}

$\bullet$ If $2^{j(a-1)}\gg |x|$, then $2^j\ge 2^{j(1-a)}$ and hence $j\geq 0$. Integrating by parts $n$ times, we have for any $n\in \N$
\EQ{
\abs{\tilde I_j(x)}
 &= \aabs{\int [\{-i(ar^{a-1}-|x|)^{-1}\partial_r\}^n e^{i(r^a-r|x|)}]\cdot\chi_{j}(r)|r|^{\frac{2a}{q}-1}h(r|x|)dr}\\
 &\les (2^j|x|)^{-\frac{d-1}{2}}2^{-jan}2^{j\frac{2a}{q}} \le 2^{-jan}2^{j\frac{2a}{q}}.
}

Therefore, we get
\EQ{
\sum_{|x|^{-1}\ll 2^j\leq 2^k}\abs{\tilde I_j(x)}\les& \sum_{2^{j(a-1)}\sim |x|} 1+\sum_{\min(2^j,2^{(1-a)j})\gg |x|^{-1}}(2^j|x|)^{-\frac{d-1}{2}-n}2^{\frac{jad}{2}}\\
&+\sum_{2^j\gg |x|^{-1}\gg 2^{j(1-a)}} 2^{-jan}2^{\frac{2ja}{q}}\les 1.
}
We complete the proof of the lemma.
\end{proof}

The failure of the estimate \eqref{eq:Strsec2} for $(a,q,d)=(1,4,2)$ was shown in \cite{FangWang}. In the rest of this section we prove Theorem \ref{thm1} by using $a$-stable L\'evy processes\footnote{The authors would like to thank Kais Hamza for the discussion on the L\'evy processes.}. First we collect some properties of these processes.  For $a\in (0,2]$, let 
\begin{align*}
f_a(x)=(2\pi)^{-d}\int_{\R^d}e^{ix\xi}e^{-|\xi|^a}d\xi.
\end{align*}
Then $f_a$ is a smooth strictly positive radial function on $\R^d$ satisfying $\int_{\R^d} f_a(x)dx=1$. In particular, we have
\EQn{ \label{kernel formula}
 \pt f_1(x)=C_1(1+|x|^2)^{-\frac{d+1}{2}}, \quad f_2(x)=C_2 e^{-\frac{|x|^2}{4}}, 
 \pr 0<a<2 \implies f_a(x)\sim (1+|x|)^{-(d+a)},}
see \cite{Levy1}. 
It is well-known that Random variables with distributions given by the density $f_a$ ($0<a\leq 2$) are stable. For $t>0$, let $f_a(t,x)=t^{-d/a}f_a(t^{-1/a}x)$. 

Let $\{Y(t):t\geq 0\}$ be the independent symmetric $a$-stable L\'evy process in $\R^d$ with $Y(0)=0$, that is, a process with the stationary independent increments, and the increment $Y_t-Y_s$ has a distribution given by the density $f_a(|t-s|,x)$. Let $\{\wt Y(t):t\geq 0\}$ be another independent copy of $Y(t)$.  The existence of these processes and their properties were well-understood \cite{Levy2, Levy1}. We construct a process $X_t:=X(t)$ on the whole line $\R$ by defining
\begin{align*}
X(t)=
\begin{cases}
Y(t), \quad t\geq 0;\\
\wt Y(-t), \quad t<0.
\end{cases}
\end{align*}
By this construction we know $X_t-X_s$ has a distribution given by the density $f_a(|t-s|,x)$ for $t\neq s$, and hence
\begin{align}\label{eq:char}
\E e^{i\eta (X_t-X_s)}=\int_{\R^d}e^{i\eta x}f_a(|t-s|,x)dx=e^{-|t-s||\eta|^a}
\end{align}
where $\E$ is the expectation.

Now we prove Theorem \ref{thm1}. The estimate \eqref{eq:crit} is equivalent to
\begin{align*}
\norm{e^{itD^a}\jb{D}^{-\frac{d-a}{2}}f}_{L_t^2L^\infty_x}\leq C \norm{f}_{L^2}.
\end{align*}
By $TT^*$ method, we see it is further equivalent to
\begin{align*}
\normo{\int e^{i(t-s)D^a}[\jb{D}^{a-d}g(s,\cdot)](x)ds}_{L_t^2L^\infty_x}\leq C^2\norm{g}_{L_t^2L_x^1}.
\end{align*}
Assume $g(t,x)=\alpha(t)h(x-X(t))$, where $\alpha\in L^2(\R)$, and $h$ is a Schwartz function. Then the above inequality implies
\begin{align*}
\normo{\int e^{i(t-s)D^a}[\jb{D}^{a-d}g(s,\cdot)](X(t))ds}_{L_t^2}\leq C^2\norm{\alpha}_{L_t^2}\norm{h}_{L_x^1}
\end{align*}
which further implies
\begin{align*}
\normo{\int K(t,s)\alpha(s)ds}_{L_t^2}\leq C^2\norm{\alpha}_{L_t^2}\norm{h}_{L_x^1}
\end{align*}
where
\begin{align*}
K(t,s)=\int_{\R^d}e^{i(t-s)|\xi|^a}(1+|\xi|^2)^{\frac{a-d}{2}}e^{i[X(t)-X(s)]\xi}\hat h(\xi)d\xi.
\end{align*}
By Minkowski's inequality the above inequality implies
\begin{align*}
\normo{\int \E K(t,s)\alpha(s)ds}_{L_t^2}\leq C^2\norm{\alpha}_{L_t^2}\norm{h}_{L_x^1}
\end{align*}
where
\begin{align*}
\E K(t,s)
&=\E \int_{\R^d}e^{i(t-s)|\xi|^a}(1+|\xi|^2)^{\frac{a-d}{2}}e^{i(X_t-X_s) \xi}\hat h(\xi)d\xi\\
&=\int_{\R^d}e^{i(t-s)|\xi|^a}(1+|\xi|^2)^{\frac{a-d}{2}} e^{-{|t-s|\cdot |\xi|^a}}\hat h(\xi)d\xi\\
&=:K_h(t-s).
\end{align*}
In the second equality above we used \eqref{eq:char}.
For a fixed function $h$, the operator given by the kernel $K_h$ is a convolution operator, and thus its $L^2$ boundedness implies
\begin{align}\label{eq:Kbound}
\norm{\widehat{K_h}(\tau)}_{L_\tau^\infty} \leq C \norm{h}_{L^1},
\end{align}
for any Schwartz function $h$.

To prove Theorem \ref{thm1}, it suffices to disprove \eqref{eq:Kbound}. By direct calculation we get
\begin{align*}
\widehat{K_h}(\tau)=C\int_{\R^d}\frac{|\xi|^a(1+|\xi|^2)^{\frac{a-d}{2}}}{(\tau-|\xi|^a)^2+|\xi|^{2a}}\hat h(\xi)d\xi.
\end{align*}
Thus $\widehat{K_h}(0)=\int_{\R^d}|\xi|^{-a}(1+|\xi|^2)^{\frac{a-d}{2}}\hat h(\xi)d\xi$. The integrand for large $\xi$ is essentially $|\xi|^{-d}\hat h(\xi)$ which makes the integral a logarithmic infinity. For example, one can take $h$ to be an approximating sequence of $\delta(x)$. Then clearly, \eqref{eq:Kbound} fails. 

\begin{rem}
The above proof only use the fact that $X(t)$ satisfies \eqref{eq:char}. For the wave equation (namely when $a=1$), we can take $X(t)=tZ$, where $Z$ is a random variable with characteristic function $e^{-|\eta|}$. 
\end{rem}

\section{Strichartz estimates with angular smoothing effect}
In this section, we prove Theorem \ref{thm2}.  It is
equivalent to show
\begin{align}\label{eq:Stri2}
\norm{\Lambda_\om^s (T_af)}_{L_t^2 L_{\rho}^\infty L_\om^2}\les \norm{f}_{L_x^2},
\end{align}
where
\[T_af(t,x)=\int_{\R^d}e^{i(x\xi+t|\xi|^a)}|\xi|^{\frac{a-d}{2}}f(\xi)d\xi.\]
Now we apply the spherical-radius decomposition to $f$ (see \cite{Stein1})
\[f(\xi)=f(\rho \om)=\sum_{k\geq 0}\sum_{1\leq l\leq n(k)}a_k^l(\rho)Y_k^l(\om), \quad \rho=|\xi|,\ \om=\frac{\xi}{|\xi|}\in \cir^{d-1},\]
where $k\geq
0,1\leq l\leq n(k)$, $n(k)=C_{d+k-1}^k-C_{d+k-3}^{k-2}$, $\{Y_k^l\}$ is the standard orthonormal basis (spherical harmonics of degree $k$) in $L^2(\cir^{d-1})$. We have (see \cite{Stein1})
\[T_af(t,x)=\sum_{k,l}c_{d,k}T_a^\nu (\rho^{\frac{d-1}{2}}a_k^l)(t,|x|)Y_k^l(x/|x|),\]
where $c_{d,k}=(2\pi)^{d/2}i^{-k}$, $\nu=\nu(k)=\frac{d-2+2k}{2}$, and
\[T_a^\nu(h)(t,r)=r^{-\frac{d-2}{2}}\int_0^\infty e^{it\rho^a}J_\nu(r\rho)\rho^{\frac{-d+1+a}{2}}h(\rho)d\rho.\]
Here $J_\nu(r)$ is the Bessel function given by \eqref{eq:Bessel}.
Thus \eqref{eq:Stri2} is equivalent to 
\begin{align}\label{eq:Stri3}
\norm{(1+|k|)^sT_a^\nu (a_k^l)}_{L_t^2L_{r}^\infty l_{k,l}^2}\les
\norm{\{a_k^l(\rho)\}}_{L_\rho^2l_{k,l}^2}.
\end{align}
To prove \eqref{eq:Stri3}, it is equivalent to show
\begin{align}\label{eq:goal}
\norm{T_a^\nu (h)}_{L_t^2 L_{r}^\infty}\leq C (1+\nu)^{-s}\norm{h}_{L^2},
\end{align}
with constant $C$ independent of $\nu$.

Decompose $T_a^\nu (h)=\sum_{j\in \Z}T_{a,j}^\nu (h)$, where 
\[T_{a,j}^\nu(h)(t,r)=\chi_j(r)r^{-\frac{d-2}{2}}\int_0^\infty e^{it\rho^a}J_\nu(r\rho)\rho^{\frac{-d+1+a}{2}}h(\rho)d\rho.\]
Then obviously
\begin{align*}
\norm{T_a^\nu (h)}_{L_t^2 L_{r}^\infty}\les&\normo{T_{a,j}^\nu (h)}_{L_t^2l^\infty_j L_r^\infty }.
\end{align*}
We decompose further $T_{a,j}^\nu(h)=T_{a,j,*}^\nu(h)+\sum_{j'\geq -j-4}T_{a,j,j'}^\nu(h)$, where
\begin{align*}
T_{a,j,*}^\nu(h)(t,r)=&\chi_j(r)r^{-\frac{d-2}{2}}\int_0^\infty e^{it\rho^a}J_\nu(r\rho)\rho^{\frac{-d+1+a}{2}}\chi_{\leq -j-5}(\rho)h(\rho)d\rho,\\
T_{a,j,j'}^\nu(h)(t,r)=&\chi_j(r)r^{-\frac{d-2}{2}}\int_0^\infty e^{it\rho^a}J_\nu(r\rho)\rho^{\frac{-d+1+a}{2}}\chi_{j'}(\rho)h(\rho)d\rho.
\end{align*}
Then we have
\[\norm{T_{a,j}^\nu(h)}_{L_t^2l^\infty_j L_r^\infty }\leq \norm{T_{a,j,*}^\nu(h)}_{L_t^2l^\infty_j L_r^\infty }+\normo{\sum_{j'\geq -j-4}\norm{T_{a,j,j'}^\nu(h)}_{L_t^2 L_r^\infty }}_{l^2_j }.\]

First we control $T_{a,j,*}^\nu$. We will use the vanishing properties of $J_\nu(r)$ near $r=0$.  Using the formula \eqref{eq:Bessel} and Taylor's expansion for $e^{irt}$ we get
\begin{align*}
T_{a,j,*}^\nu (h)=\chi_j(r)r^{-\frac{d-2}{2}}\int_0^\infty \frac{e^{it\rho^a}(r\rho/2)^\nu}{\Gamma(\nu+1/2)\pi^{1/2}}&\int_{-1}^1 \sum_{n=0}^\infty \frac{(i\theta r\rho)^n}{n!}(1-\theta^2)^{\nu-1/2}d\theta\\
&\times\rho^{\frac{-d+1+a}{2}}\chi_{\leq -j-5}(\rho)h(\rho)d\rho.
\end{align*}
Then we have
\begin{align*}
|T_{a,j,*}^\nu (h)|\les&\sum_{n=0}^\infty\chi_j(r)2^{-j\frac{d-2}{2}}2^{j\nu}2^{jn}\bigg|\int_0^\infty \frac{e^{it\rho^a}(\rho/2)^\nu}{\Gamma(\nu+1/2)\pi^{1/2}}\\
&\int_{-1}^1
\frac{(i\theta \rho)^n}{n!}(1-\theta^2)^{\nu-1/2}d\theta\rho^{\frac{-d+1+a}{2}}\chi_{\leq -j-5}(\rho)h(\rho)d\theta d\rho\bigg|.
\end{align*}
Making a change of variable $\eta=\rho^a$, and then using Plancherel's equality, we get
\begin{align*}
&\norm{T_{a,j,*}^\nu (h)}_{L_t^2 l_j^\infty L_r^\infty }\\
\les&\frac{1}{\Gamma(\nu+1/2)}\sum_{n=0}^\infty \frac{C^n}{n!}\normo{\sup_j\left|\int_0^\infty e^{it\rho^a}(2^j\rho)^{\nu+n-\frac{d-2}{2}}\chi_{\leq -j-5}(\rho)\rho^{\frac{a-1}{2}}h(\rho)d\rho\right|}_{L^2_t}\\
\les& \frac{1}{\Gamma(\nu+1/2)}\sum_{n=0}^\infty \frac{C^n}{n!}\normo{\sup_j\left|\int_0^\infty e^{it\rho}(2^j\rho^{1/a})^{k+n}\chi_{\leq -j-5}(\rho^{1/a})h(\rho^{1/a})\rho^{-\frac{a-1}{2a}}d\rho\right|}_{L^2_t}\\
=:&\frac{1}{\Gamma(\nu+1/2)}\sum_{n=0}^\infty \frac{C^n}{n!}A_n.
\end{align*}
Let $\tilde h(\rho)=h(\rho^{1/a})\rho^{-\frac{a-1}{2a}}1_{[0,\infty)}(\rho)$. Then $\norm{\tilde h}_2\sim \norm{h}_2$.  If $k+n=0$, then 
\begin{align*}
A_n
\les&\normo{\sup_j\left|\int e^{it\rho}\chi_{\leq -j-5}(\rho^{1/a})\tilde h(\rho)d\rho\right|}_{L^2_t}\\
\les& \norm{M(\hat{\tilde h})(t)}_{L_t^2}\les \norm{h}_2,
\end{align*}
where $M$ is the Hardy-Littlewood maximal operator since $\chi_{\leq 1}(\rho^{1/a})$ is a Schwartz function.
If $k+n\geq 1$, then by Plancherel's equality we get
\begin{align*}
A_n
\les&\normo{\int e^{it\rho}(2^j\rho^{1/a})^{k+n}\chi_{\leq -j-5}(\rho^{1/a})\tilde h(\rho)d\rho}_{l_j^2L^2_t}\\
\les&\normo{(2^j\rho^{1/a})^{k+n}\chi_{\leq -j-5}(\rho^{1/a})\tilde h(\rho)}_{l_j^2L^2_\rho}\\
\les&\normo{\sum_{j+j'\leq -5}2^{(j+j')(k+n)}\chi_{j'}(\rho^{1/a})\tilde h(\rho)}_{l_j^2L^2_\rho}\les \norm{\tilde h}_{2}\les \norm{h}_2.
\end{align*}
So we get 
\begin{align*}
\norm{T_{a,j,*}^\nu (h)}_{L_t^2 l_j^\infty L_r^\infty }\les (1+\nu)^{-K} \norm{h}_2, \quad \forall\ K\in \N.
\end{align*}

Next we control $T_{a,j,j'}^\nu$.  We will use the asymptotic behaviour of $J_\nu(r)$ for $r\to \infty$. We have
\begin{align*}
\norm{T_{a,j,j'}^\nu(h)}_{L_t^2L_r^\infty}\les &2^{-j\frac{d-2}{2}}2^{-j'a/2}2^{j'\frac{-d+2+a}{2}}\normb{\chi_{j+j'}(r)\int_0^\infty e^{it\rho^a}J_\nu(r\rho)\\
&\qquad\qquad\qquad\qquad\qquad\times\rho^{\frac{-d+1+a}{2}}\chi_{0}(\rho)h(2^{j'}\rho)2^{j'/2}d\rho}_{L_t^2L_r^\infty}\\
 =&2^{-(j+j')\frac{d-2}{2}}\norm{S^\nu_{j+j'}(h_{j'})}_{L_t^2L_r^\infty},
\end{align*}
where we denote  $h_{j'}:=h(2^{j'}\rho)2^{j'/2}$ and the operator $S^\nu_j$ is defined by 
\[S^\nu_{j}(h):=\chi_{j}(r)\int_0^\infty e^{it\rho^a}J_\nu(r\rho)\rho^{\frac{-d+1+a}{2}}\chi_{0}(\rho)h(\rho)d\rho.\] 
It is the same operator as $S_R^{\nu,a}$ with $R=2^{j}$ that was studied in \cite{Guo}, or the operator $T_{j,k}^{\nu}$ with $\omega(\rho)=\rho^a, k=0$ studied in \cite{GHN}. 

{\bf Case 1:} $\nu\les 2^{j+j'}$.

In this case, we can use the result in Lemma 3.6 in \cite{GHN} for $T_{j,k}^{\nu}$ and then get
\EQ{
\norm{S^\nu_{j+j'}(h)}_{L_t^2L_r^\infty}\les \norm{h}_2,
}
and thus
\[\norm{T_{a,j,j'}^\nu(h)}_{L_t^2L_r^\infty}\les 2^{-(j+j')\frac{d-2}{2}}\norm{\chi_{j'}(\rho)h(\rho)}_2.\]
Therefore, we get
\begin{align*}
\normo{\sum_{j'+j\geq -4}\norm{T_{a,j,j'}^\nu (h)}_{L_t^2L_r^\infty}}_{l_j^2}\les \nu^{-\frac{d-2}{2}+\epsilon}\norm{h}_2.
\end{align*}
This suffices to show Part (1) of Theorem \ref{thm2}.

{\bf Case 2:} $\nu\gg 2^{j+j'}$.

In this case,  we use the Stirling formula for the Gamma function and get
\[\Gamma(\nu+1)\geq C\nu^{1/2}(\nu/e)^\nu\]
from which we get better decay
\begin{align*}
|J_\nu(r)|+|J_\nu'(r)|\leq& C\nu^{-K}, \quad \forall\ K\in \N.
\end{align*}
By the above bound and the Sobolev embedding we get
\begin{align}\label{eq:Sbound}
\norm{S^\nu_{j+j'}(h)}_{L_t^2L_r^\infty}\les \norm{S^\nu_{j+j'}(h)}_{L_t^2L_r^2}+\norm{\partial_r S^\nu_{j+j'}(h)}_{L_t^2L_r^2}\les \nu^{-K} \norm{h}_2,\quad \forall\ K\in \N.
\end{align}
and thus
\[\norm{T_{a,j,j'}^\nu(h)}_{L_t^2L_r^\infty}\les \nu^{-K}2^{-(j+j')\frac{d-2}{2}}\norm{\chi_{j'}(\rho)h(\rho)}_2.\]
Therefore, we get
\begin{align*}
\normo{\sum_{j'+j\geq -4}\norm{T_{a,j,j'}^\nu (h)}_{L_t^2L_r^\infty}}_{l_j^2}\les \nu^{-K}\norm{h}_2, \quad \forall \ K\in \N
\end{align*}
which suffices for our purpose.

When $1<a<\infty$, we can improve Case 1. Applying the results in Lemma 3.10 in \cite{GHN} by taking $\lambda=R^{3/7}$ we get
\begin{align*}
\norm{S^\nu_{j+j'}(h)}_{L_t^2L_r^\infty}\les 2^{-(j+j')/7}  \norm{h}_2.
\end{align*}
Therefore, for the case $1<a<\infty$ we get
\begin{align*}
\normo{\sum_{j'+j\geq -4}\norm{T_{a,j,j'}^\nu (h)}_{L_t^2L_r^\infty}}_{l_j^2}\les \nu^{-\frac{7d-12}{14}+\e}\norm{h}_2.
\end{align*}
This suffices to show Part (2) of Theorem \ref{thm2}.

\begin{rem}
For $0<a\leq 1$ and $d=2$, by the similar arguments we can get the following estimate: if $q>2$
\begin{align}\label{eq:halfwave4infty}
\norm{\Lambda_\om^{\frac{1}{2}-\frac{1}{q}-\epsilon}e^{itD^a}f}_{L_t^q\Lr^\infty_\rho L^2_{\om}(\R\times \R^2)}\les \norm{f}_{\dot H^{\frac{2-a}{2}}}, \quad \forall \epsilon>0.
\end{align}
Indeed, to prove \eqref{eq:halfwave4infty}, we just interpolate the estimates $\norm{T^\nu_{a,j,j'}(h)}_{L_t^2L_r^\infty}\les \norm{h}_{L^2}$ with the following estimate
\begin{align*}
\norm{T^\nu_{a,j,j'}(h)}_{L_t^\infty L_r^\infty}\les 2^{-(j+j')/2}\norm{h}_{L^2}.
\end{align*}
Then we use the $\epsilon$-room to do the summation.
\end{rem}

\section{Inhomogeneous Strichartz estimates}

In this section we prove Theorem \ref{thm3}. These double endpoint inhomogeneous Strichartz estimates have useful applications of controlling potential terms. These estimates can not be deduced directly by Christ-Kiselev lemma from homogeneous estimates.

\begin{proof}[Proof of Theorem \ref{thm3}]
We may assume $q\geq r$, since otherwise we consider the adjoint estimate. Thus we may further assume $q=r$ by Bernstein's inequality. It suffices to prove
\begin{align*}
|T(F,G)|\les \norm{F}_{L_t^2\Lr_\rho^{r'}L_\om^2}\norm{G}_{L_t^2\Lr_\rho^{r'}L_\om^2}
\end{align*}
where
\[T(F,G)=\int_{-\infty}^\infty \int_{-\infty}^t \jb{e^{-isD^a} P_0F(\cdot,s), e^{-itD^a}P_0G(\cdot,t)}_{L_x^2}dsdt.\]
Decompose $T$ dyadically $T(F,G)=\sum_j T_j(F,G)$ where
\[T_j(F,G)=\int_{-\infty}^\infty \int_{t-2^{j+1}<s\leq t-2^j} \jb{e^{-isD^a} P_0F(\cdot,s), e^{-itD^a}P_0G(\cdot,t)}_{L_x^2}dsdt.\]
It suffices to prove
\begin{align*}
\sum_{j\in \Z}|T_j(F,G)|\les \norm{F}_{L_t^2\Lr_\rho^{r'}L_\om^2}\norm{G}_{L_t^2\Lr_\rho^{r'}L_\om^2}.
\end{align*}

Now we consider the following estimates for $T_j(F,G)$:
\begin{align*}
|T_j(F,G)|\les& C(j) \norm{F}_{L_t^2\Lr_\rho^{r'}L_\om^2}\norm{G}_{L_t^2\Lr_\rho^{r'}L_\om^2}.
\end{align*}
We may assume $F,G$ both have compact supports in $t$ on an interval of length $O(2^j)$.
First we have the trivial estimates
\begin{align*}
|T_j(F,G)|\les& \norm{P_0F}_{L_t^1L_x^{2}}\norm{P_0G}_{L_t^1L_x^{2}}\les 2^j \norm{P_0F}_{L_t^2 L_x^2}\norm{P_0G}_{L_t^2 L_x^2}\\
\les&2^j \norm{F}_{L_t^2 L_x^{r'}}\norm{G}_{L_t^2 L_x^{r'}}
 \les 2^j \norm{F}_{L_t^2\Lr_\rho^{r'}L_\om^2}\norm{G}_{L_t^2\Lr_\rho^{r'}L_\om^2}
\end{align*}
This estimate suffices to sum over $j\leq 0$.  It remains to consider $j>0$. By the dispersive estimate we have
\begin{align}\label{eq:decay}
|T_j(F,G)|\les& 2^{-\frac{j(d+2-d_a)}{2}}\norm{F}_{L_t^1L_x^{1}}\norm{G}_{L_t^1L_x^{1}}\les 2^{-\frac{j(d+2-d_a)}{2}}2^j \norm{F}_{L_t^2L_x^{1}}\norm{G}_{L_t^2L_x^{1}}.
\end{align}
Fix $a\in (\frac{4d+2-2d_a}{2d-d_a-1}, r)$. By the Christ-Kiselev lemma we can get for any $\e>0$
\begin{align*}
\normo{\int_{|t-s|\sim 2^j} e^{i(t-s)D^a}P_0 f(s)ds}_{L_t^{2(\e)}\Lr_\rho^a L^2_\om}\les \norm{f}_{L_t^2\Lr_\rho^{a'}L^2_\om}
\end{align*}
where 
\[\frac{1}{2(\e)}=\frac{1}{2}-\e.\]
Then we get
\begin{align}\label{eq:ep}
|T_j(F,G)|\les& 
\normo{\int_{|t-s|\sim 2^j} e^{i(t-s)D^a}P_0 F(s)ds}_{L_t^{2(\e)}\Lr_\rho^a L^2_\om} \norm{G}_{L_t^{2(-\e)}\Lr_\rho^{a'}L^2_\om}\nonumber\\
\les&\norm{F}_{L_t^{2}\Lr_\rho^{a'}L^2_\om}\norm{G}_{L_t^{2(-\e)}\Lr_\rho^{a'}L^2_\om}\les 2^{j\e}\norm{F}_{L_t^{2}\Lr_\rho^{a'}L^2_\om}\norm{G}_{L_t^{2}\Lr_\rho^{a'}L^2_\om}
\end{align}
Interpolating \eqref{eq:decay} and \eqref{eq:ep}, and choosing $\e$ sufficiently small, we get that for some $\theta>0$
\begin{align*}
|T_j(F,G)|\les& 2^{-j\theta}\norm{F}_{L_t^{2}\Lr_\rho^{r'}L^2_\om}\norm{G}_{L_t^{2}\Lr_\rho^{r'}L^2_\om}.
\end{align*}
Thus we can sum over $j\geq 0$ and hence complete the proof.
\end{proof}

In the above proof, we see the dispersive estimates of rate $|t|^{-\theta}$ with some $\theta>1$ is crucial. If the decay rate $\theta=1$, the estimates may fail.  Indeed, in \cite{Tao} Tao showed that 
\begin{align*}
\normo{\int_0^t e^{-i(t-s)\Delta} f(s)ds}_{L_t^2L_x^\infty(\R\times \R^2)}\les \norm{f}_{L_t^2L_x^1}
\end{align*}
fails even when $f$ is radial in $x$.
Finally we observe its extensions to general dimensions. 
Condider a general form of Strichartz estimate 
\EQ{
\normo{\int_{-\I}^t e^{i(t-s)D^a}f(s)ds}_{L^p_t X^*} \lec \|f\|_{L^{q'}_t Y},}
where $X,Y$ are two Banach spaces embedded into $\cS'(\R^d)$. 
By duality, it is equivalent to 
\EQ{
 \iint_{s<t}(e^{i(t-s)D^a}f(s)|g(t)) dsdt |\lec \|f\|_{L^{p'}_t X}\|g\|_{L^{q'}_t Y}.}
Restriction to the functions with separated variables yields 
\EQ{
 \pt K_{\fy,\psi}(t):=(e^{itD^a}\fy|\psi) 
 \pr\implies \iint_{s<t}K_{\fy,\psi}(t-s)f(s)g(t)dsdt |\lec \|f\|_{L^{p'}_t}\|g\|_{L^{q'}_t}\|\fy\|_{X}\|\psi\|_{Y}.}
A simple case of $K(t)$ is when, with a parameter $\s>0$, 
\EQ{
 \pt \hat\fy=|\x|^\al \s^\ga e^{-\s|\x|^a/2}, \pq \hat\psi=|\x|^\be \s^{-\ga} e^{-\s|\x|^a/2},
 \pq \al+\be=a-d.}
We can explicitly compute 
\EQ{
 c_dK(t)=\int_0^\I e^{itr^a-\s r^a}ar^{a-1}dr
 =\int_0^\I e^{its-\s s}ds=\frac{1}{\s-it}
 =\frac{\s+it}{\s^2+t^2}.}
If we have a uniform bound of the above estimate for such $\fy,\psi$ with $\s\to0+$, then the limit after taking the imaginary part is 
\EQ{
 \iint_{s<t}\frac{1}{t-s}f(s)g(t)dsdt,}
which is clearly divergent, for any $p,q\in[1,\I]$. 
Thus we have obtained the following criterion for the Strichartz estimate, not necessarily at the endpoint. 

\begin{prop}
Let $d\in\N$, $a>0$, $\al,\be\in(-d,\I)$ such that $\al+\be=a-d$. 
Let $X,Y$ be two Banach spaces of functions embedded into $\cS'(\R^d)$. 
Suppose that the pair of functions with a parameter $\s>0$ 
\EQ{
 \F^{-1}(|\x|^\al \s^\ga e^{-\s|\x|^a/2}),
 \pq \F^{-1}(|\x|^\be \s^{-\ga} e^{-\s|\x|^a/2}) }
are bounded as $\s\to+0$ respectively in $X$ and in $Y$. 
Then for any $p,q\in[1,\I]$, the following estimate is false. 
\EQ{
 \normo{\int_{-\I}^t e^{i(t-s)D^a}f(s)ds}_{L^p_t X^*} \lec \|f\|_{L^{q'}_t Y}.}
\end{prop}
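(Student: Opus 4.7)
The plan is to argue by contradiction, assuming the Strichartz estimate holds uniformly for some $p,q\in[1,\I]$, and to extract a divergent Hilbert-type bilinear form from it by plugging in the specific tensorial test data. First, I would dualize the inequality in both time and space (the latter using $X\subset \cS'(\R^d)\subset X^{**}$) to recast it as
\EQ{
\left|\iint_{s<t}(e^{i(t-s)D^a}f(s)\,|\,g(t))_{L^2_x}\,dsdt\right|\lec \|f\|_{L^{p'}_t X}\|g\|_{L^{q'}_t Y},
}
and then substitute the tensor products $f(s,x)=f_0(s)\fy(x)$, $g(t,x)=g_0(t)\psi(x)$ with $\fy,\psi$ the functions of the hypothesis. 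Since $\|\fy\|_X$ and $\|\psi\|_Y$ stay bounded as $\s\to0+$, this reduces everything to the uniform scalar inequality
\EQ{
\sup_{\s>0}\left|\iint_{s<t}K_\s(t-s)f_0(s)g_0(t)\,dsdt\right|\lec \|f_0\|_{L^{p'}}\|g_0\|_{L^{q'}},
}
where $K_\s(t):=(e^{itD^a}\fy\,|\,\psi)_{L^2_x}$.

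Next I would compute $K_\s$ explicitly. By Plancherel the relevant spectral density is $\wh\fy\cdot\ba{\wh\psi}=|\x|^{a-d}e^{-\s|\x|^a}$, which is radial, so polar coordinates followed by the substitution $u=r^a$ yield
\EQ{
K_\s(t)=c_d\int_0^\I e^{(it-\s)u}\,du=\frac{c_d}{\s-it}=c_d\,\frac{\s+it}{\s^2+t^2},
}
with $c_d>0$ depending only on $d$. Restricting to real $f_0,g_0\geq0$ and taking imaginary parts of the uniform bound then gives
\EQ{
\iint_{s<t}\frac{c_d(t-s)}{\s^2+(t-s)^2}\,f_0(s)g_0(t)\,dsdt\lec \|f_0\|_{L^{p'}}\|g_0\|_{L^{q'}}
}
uniformly in $\s>0$.

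The integrand is nonnegative and, for each fixed $\tau=t-s>0$, monotonically increasing as $\s\to0+$ (since $\p_\s[\tau/(\s^2+\tau^2)]=-2\s\tau/(\s^2+\tau^2)^2<0$). The monotone convergence theorem therefore passes to the limit and produces the singular bound
\EQ{
\iint_{s<t}\frac{f_0(s)g_0(t)}{t-s}\,dsdt\lec \|f_0\|_{L^{p'}}\|g_0\|_{L^{q'}}.
}
I would then contradict this using the explicit choice $f_0=g_0=\mathbf{1}_{[0,1]}$: both norms on the right are finite, while the double integral equals $\int_0^1\int_0^t(t-s)^{-1}\,ds\,dt=+\I$ due to the logarithmic singularity on the diagonal. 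This rules out the Strichartz estimate for every $p,q\in[1,\I]$ simultaneously.

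The only potentially delicate step is the passage to the limit $\s\to0+$, but the manifest monotonicity on the positive cone $\{f_0,g_0\geq 0\}$ reduces this to a routine application of monotone convergence; there is essentially no obstacle beyond bookkeeping. The remainder of the argument is a clean unpacking of the computation already sketched before the statement.
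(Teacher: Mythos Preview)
Your proposal is correct and follows essentially the same route as the paper: dualize to a bilinear form, insert the tensorial test data with the given $\fy,\psi$, compute $K_\s(t)=c_d/(\s-it)$ via the substitution $u=r^a$, take imaginary parts, and let $\s\to0+$ to land on the divergent Hilbert-type integral $\iint_{s<t}(t-s)^{-1}f_0(s)g_0(t)\,dsdt$. The only differences are expository: you justify the limit via monotone convergence on the positive cone and exhibit the explicit blow-up with $f_0=g_0=\mathbf{1}_{[0,1]}$, whereas the paper simply asserts that the limiting bilinear form is ``clearly divergent, for any $p,q\in[1,\I]$.''
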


The most typical scaling (including Tao's case $a=2$) is 
\EQ{
 \al=\ga=0,\pq \be=a-d, \pq X=\Lr^1_r L^\I_\te,\pq Y=D^{a-d}X.}
Then we have, using \eqref{kernel formula},  
\EQ{
 &\F^{-1}(|\x|^\al \s^\ga e^{-\s|\x|^a/2}) =D^{d-a}\F^{-1}(|\x|^\be \s^{-\ga} e^{-\s|\x|^a/2})
 =\F^{-1}e^{-\s|\x|^a/2} \in X,}
so that we can apply the above proposition.  
Explicitly, the following inequality
\EQ{
\normo{\int_0^t e^{i(t-s)D^a} f(s)ds}_{L^p_t \Lr^\I_r L^1_\omega} \lec \|D^{d-a}f\|_{L^{q'}_t \Lr^{1}_r L^\I_\omega}}
fails, even when $f$ is radial in $x$. Note however we can not replace the norms by 
\EQ{
 \Lr^\I_r \to \dot B^0_{\I,p} \text{ or } \Lr^1_r \to \dot B^0_{1,p}}
for any $p>1$, since $\dot B^0_{1,q}$ with $q<\I$ does not include the Gauss functions. 
In fact, $f\in\dot B^0_{1,q}$ with $q<\I$ implies that
\EQ{
 \|\hat f\|_{L^\I(|\x|\sim N)} \lec \|f_N\|_{L^1} \to 0 \pq(N\to+0).}

\appendix

\section{Cubic fractional Schr\"odinger equations}

In the appendix, we consider the Cauchy problem to the fractional Schr\"odinger equation
\EQn{\label{eq:fcNLS}
iu_t+D^au=|u|^2u,\quad u(0,x)=\phi.
}
By scaling invariance: for $\lambda>0$,
\[u(t,x)\to u_\lambda=\lambda^{a/2}u(\lambda^a t, \lambda x), \quad \phi\to \lambda^{a/2}\phi(\lambda x).\]
The critical Sobolev space in the sense of scaling is $\dot H^{s_c}$ with $s_c=\frac{d-a}{2}$ since $\norm{\lambda^{a/2}\phi(\lambda x)}_{\dot H^{s_c}}=\norm{\phi}_{\dot H^{s_c}}$. We prove the following results.
\begin{thm}\label{thm4}
Assume $d=2$, $0<a<2$ and $a\ne 1$, $\phi\in \dot H^{\frac{2-a}{2},1}$. Then the Cauchy problem \eqref{eq:fcNLS} is locally well-posed. Moreover, if $\norm{\phi}_{\dot H^{\frac{2-a}{2},1}}$ is sufficiently small, then we have global well-posedness and scattering. 
\end{thm}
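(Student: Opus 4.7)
The plan is a standard Picard iteration for the Duhamel map
\[
 \Phi(u)(t) = S_a(t)\phi - i\int_0^t S_a(t-s)\bigl(|u|^2 u\bigr)(s)\,ds
\]
in a scaling-critical Banach space $X$ whose norm combines a classical Strichartz component with the angular-smoothing norm provided by Theorem \ref{thm2}. Concretely, I would take
\[
 \|u\|_X := \|u\|_{L_t^\infty \dot H^{(2-a)/2,1}} + \bigl\|\Lambda_\omega^{1+\epsilon}\, u\bigr\|_{L_t^2\Lr^\infty_\rho L^2_\omega} + \|u\|_{L_t^q L_x^p}
\]
for a suitable Strichartz pair $(q,p)$ at scaling $s_c=(2-a)/2$ and for a small $\epsilon=\epsilon(a)>0$; the exact value of $\epsilon$ comes from Theorem \ref{thm2}(2) when $1<a<2$ (where $1/7$ is available in $d=2$) or from the remark around \eqref{eq:halfwave4infty} when $0<a<1$. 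The extra angular derivative built into the data space $\dot H^{(2-a)/2,1}$ is precisely what allows us to apply these estimates to $\Lambda_\omega \phi$ to produce the $\Lambda_\omega^{1+\epsilon}$ norm in the second term.

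The linear bound $\|\Phi(0)\|_X \lesssim \|\phi\|_{\dot H^{(2-a)/2,1}}$ then follows directly from the angular-smoothing estimate applied to $\Lambda_\omega \phi$, together with standard Strichartz for the last term. For the inhomogeneous Duhamel piece, one runs a similar dual-norm argument: away from the double $L_t^2$ endpoint the Christ--Kiselev lemma converts the homogeneous estimates into inhomogeneous ones, and at the $L_t^2$ endpoint in the spherically-averaged norm we invoke Theorem \ref{thm3}. The heart of the argument is the trilinear estimate
\[
 \bigl\|\,|u|^2 u\,\bigr\|_{N}\lesssim \|u\|_X^3,
\]
where $N$ is the dual of one of the Strichartz spaces in which $\Phi(u)$ is measured. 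My strategy is to pull out one factor in $L_t^2 \Lr^\infty_\rho L^2_\omega$ (which provides the $L^\infty$ in the radial variable and is the only norm strong enough to absorb the fractional Strichartz loss in $d=d_a=2$) and the other two factors in an $L_t^\infty L_x^p$-type norm extracted from $\dot H^{(2-a)/2,1}$ by radial/angular Sobolev embeddings, most importantly $\mathcal{H}^{1/2+\delta}_2(\mathbb{S}^1)\hookrightarrow L^\infty(\mathbb{S}^1)$.

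The main obstacles are threefold. First, the fractional Strichartz estimates in the critical dimension $d=d_a=2$ suffer a loss of derivatives, so the classical norms alone cannot close; the angular smoothing of Theorem \ref{thm2} (respectively \eqref{eq:halfwave4infty} when $a<1$) is essential, which explains the restriction $a\neq 1$ and the presence of the angular derivative in $\dot H^{(2-a)/2,1}$. Second, the nonlinearity $|u|^2 u$ is not holomorphic in $u$, so care is needed in how $\Lambda_\omega$ is distributed across the conjugate factor; this is handled by a paraproduct decomposition in the spherical-harmonic expansion, using that $\Lambda_\omega$ acts diagonally on harmonics and commutes with complex conjugation up to a sign that does not affect the norm. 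Third, one must sum over Littlewood--Paley frequencies inside the $\Lr^\infty_\rho L^2_\omega$ norm; when the summation threatens to produce a logarithmic loss at the endpoint, it is absorbed using the $\epsilon$-room left by the strict inequalities $s<\tfrac{d-2}{2}+\tfrac{1}{7}$ in Theorem \ref{thm2} and $s<\tfrac{1}{2}-\tfrac{1}{q}$ in \eqref{eq:halfwave4infty}. Once the trilinear bound is in place, local well-posedness at arbitrary data and small-data global well-posedness with scattering follow from the standard contraction plus persistence-of-regularity argument.
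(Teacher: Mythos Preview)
Your plan is considerably more elaborate than what the paper actually does, and it contains a genuine gap. The paper's proof uses only two norms: $L_t^\infty \dot H^{(2-a)/2}$ and $L_t^2 L_x^\infty$. The nonlinearity is placed in $L_t^1 \dot H^{(2-a)/2}$ via the fractional Leibniz rule,
\[
 \bigl\| |u|^2 u \bigr\|_{L_t^1 \dot H^{(2-a)/2}} \lesssim \|u\|_{L_t^2 L_x^\infty}^2 \|u\|_{L_t^\infty \dot H^{(2-a)/2}},
\]
so the Duhamel term is handled simply by the energy estimate followed by the homogeneous bound from Theorem~\ref{thm2}. There is no auxiliary Strichartz pair $(q,p)$, no need for Christ--Kiselev, no paraproduct decomposition in spherical harmonics, and no call to Theorem~\ref{thm3}. (The paper writes the argument only in the radial case; the extra angular derivative in $\dot H^{(2-a)/2,1}$ is there so that the Sobolev embedding $\mathcal H^{1/2+}_2(\mathbb S^1)\hookrightarrow L^\infty(\mathbb S^1)$ upgrades $\Lr_\rho^\infty L_\omega^2$ to $L_x^\infty$, and so that the ordinary product rule for $\partial_\theta$ closes the iteration in $\dot H^{(2-a)/2,1}$.)

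The gap in your plan is the appeal to Theorem~\ref{thm3}. Your trilinear scheme pulls out only one factor in $L_t^2 \Lr_\rho^\infty L_\omega^2$ and the other two in $L_t^\infty$-type norms, which forces the source term into $L_t^2$ and hence forces a double-$L_t^2$ inhomogeneous estimate on the spherically averaged side. But Theorem~\ref{thm3} is stated and proved only for $d>d_a$, whereas here $d=2=d_a$ (since $a\neq 1$); indeed the remark after Theorem~\ref{thm3} explicitly says the case $d=d_a$ is open. So that step does not go through. The fix is exactly what the paper does: pull \emph{two} factors out in $L_t^2 L_x^\infty$, put the nonlinearity in $L_t^1 \dot H^{(2-a)/2,1}$, and avoid the double endpoint entirely.
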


The space $\dot H^{\frac{2-a}{2},1}$ is the Sobolev space $\dot H^{\frac{2-a}{2}}$ with additional one order angular regularity and $\norm{\phi}_{\dot H^{\frac{2-a}{2},1}}=\norm{\phi}_{\dot H^{\frac{2-a}{2}}}+\norm{\partial_\theta \phi}_{\dot H^{\frac{2-a}{2}}}$. Unfortunately, we can't cover the interesting case $a=1$ which is the energy-critical half wave equation since the crucial $L_t^2L_x^\infty$ estimate fails at $d=2$ in the radial case (see \cite{GuoWang}). 

We prove Theorem \ref{thm4} by the standard iteration arguments using the $L_x^\infty$ type estimate as in \cite{HSire}. We only consider the radial case. By Duhamel's principle, we get 
\EQ{
u=\Phi_\phi(u):=e^{itD^a} \phi-i\int_0^t e^{i(t-s)D^a}|u(s)|^2u(s)ds.
}
For an interval $I$, define the resolution space $X_I$:
\EQ{
X_I=\{u: \mbox{radial}, \norm{u}_{L_I^2L_x^\infty}\leq \eta, \norm{u}_{L_I^\infty \dot H_x^{\frac{2-a}{2}}}\leq M\}
}
endowed with a distance $d(u,v)=\norm{u-v}_{L_I^2L_x^\infty\cap L_I^\infty \dot H_x^{\frac{2-a}{2}}}$, where $\eta, M$ will be determined later to make $\Phi_\phi: (X_I,d)\to (X_I,d)$ a contraction mapping. Then by fractional Leibniz rule we get
\EQ{
\norm{\Phi_\phi(u)}_{L_I^2L_x^\infty}\les& \norm{e^{itD^a} \phi}_{L_I^2L_x^\infty}+C\norm{|u|^2u}_{L_I^1\dot H_x^{\frac{2-a}{2}}}\\
\les&\norm{e^{itD^a} \phi}_{L_I^2L_x^\infty}+\norm{u}_{L_I^2L_x^\infty}^2\norm{u}_{L_I^\infty \dot H_x^{\frac{2-a}{2}}}.
}
Thus we can choose suitable $\eta, M$ to close the iteration arguments.

\end{document}